\pgfplotsset{compat=1.16}
\pgfplotsset{compat=1.17}
\definecolor{ududff}{rgb}{0.30196078431372547,0.30196078431372547,1.}
\definecolor{qqwuqq}{rgb}{0.,0.39215686274509803,0.}
\definecolor{xdxdff}{rgb}{0.49019607843137253,0.49019607843137253,1.}
\theoremstyle{plain}
\newtheorem{theorem}{Theorem}[section]
\newtheorem{definition}{Definition}[section]
\newtheorem{lemma}{Lemma}[section]
\newtheorem{remark}{Remark}[section]
\newtheorem{example}{Example}[section]
\newcommand{\rank}{\rm rank}
\newcommand{\interior}{{\rm int}\kern 0.06em}
\def\<{\langle}
\def\>{\rangle}
\begin{document}
\title{KKT Optimality Conditions for Multiobjective Optimal Control Problems with Endpoint and Mixed Constraints: Application to Sustainable Energy Management}
\author{Samir Adly\thanks{Laboratoire XLIM, Universit\'e de Limoges,
123 Avenue Albert Thomas,
87060 Limoges CEDEX, France.\vskip 0mm
Email: \texttt{samir.adly@unilim.fr}.} \quad and  \quad Bui Trong Kien\footnote{Department of Optimization and Control Theory,  Institute of Mathematics, Vietnam Academy of Science and Technology, 18 Hoang	Quoc Viet road, Hanoi, Vietnam; email: btkien@math.ac.vn}
}
\date{\today}
\maketitle

\begin{abstract}
In this paper, we derive first and second-order optimality conditions of KKT type for locally optimal solutions to a class of multiobjective optimal control problems with endpoint constraint and mixed pointwise constraints. We give some sufficient conditions for normality of multipliers. Namely, we show that if the linearized system is controllable or some constraint qualifications are satisfied, then the multiplier corresponding to the objective function is different from zero. To demonstrate the practical relevance of our theoretical results, we apply these conditions to a multiobjective optimal control problem for sustainable energy management in smart grids, providing insights into the trade-offs between cost, renewable energy utilization, environmental impact, and grid stability.
\end{abstract}

{\bf Keywords.} KKT optimality conditions~$\cdot$~ Vector optimization problem~$\cdot$~Multi-objective optimal control problems~$\cdot$~ Pareto solution~$\cdot$~The Robinson constraint qualification~$\cdot$~Sustainable energy management~$\cdot$~Smart grids~$\cdot$~Renewable energy optimization.\\
{\bf AMS Subject Classification.} 49K15~$\cdot$~90C29

\section{Introduction}

We investigate the following multiobjective optimal control problem:
\begin{align}
    &\min_{u \in L^\infty([0,1], \mathbb{R}^m)} J(x,u) \quad \text{subject to:} \label{P1} \\
    & x(t) = x_0 + \int_0^t \varphi(s, x(s), u(s)) \, ds, \quad \text{for all} \ t \in [0,1], \label{P2} \\
    & h(x(1)) \leq 0, \label{P3} \\
    & g(t, x(t), u(t)) \leq 0, \quad \text{for almost all} \ t \in [0, 1], \label{P4}
\end{align}
where the objective function $J(x, u)$ is defined component-wise as
\[
    J(x, u) = \Big(J_1(x, u), J_2(x, u), \dots, J_k(x, u)\Big),
\]
with each component $J_i(x, u)$ given by
\[
    J_i(x, u) := \ell_i(x(1)) + \int_0^1 L_i(t, x(t), u(t)) \, dt, \quad i = 1, 2, \dots, k,
\]
where $L_i \colon [0, 1] \times \mathbb{R}^n \times \mathbb{R}^l \to \mathbb{R}$, $\ell_i \colon \mathbb{R}^n \to \mathbb{R}$, $\varphi \colon [0, 1] \times \mathbb{R}^n \times \mathbb{R}^m \to \mathbb{R}^n$, $h \colon \mathbb{R}^n \to \mathbb{R}^n$, $g_j \colon [0, 1] \times \mathbb{R}^n \times \mathbb{R}^m \to \mathbb{R}$,
for $i = 1, 2, \dots, k$ and $j = 1, 2, \dots, r$. 
The functions $h(x(1))$ and $g(t, x, u)$ are also defined component-wise as:
\[
    h(x(1)) = \Big(h_1(x(1)), \dots, h_n(x(1))\Big)\mbox{ and } g(t, x, u) = \Big(g_1(t, x, u), \dots, g_r(t, x, u)\Big).
\]

For vectors $y, z \in \mathbb{R}^d$, the notation $y \leq z$ implies that $z - y \in \mathbb{R}^d_+$, where
\[
    \mathbb{R}^d_+ = \big\{ (\xi_1, \xi_2, \dots, \xi_d)\in \mathbb{R}^d : \xi_i \geq 0 \ \text{for} \ i = 1, 2, \dots, d \big\}.
\]
Our primary objective is to determine a control vector function $u \in L^\infty([0, 1], \mathbb{R}^m)$ that steers the state $x \in C([0, 1], \mathbb{R}^n)$ from an initial state $x(0) = x_0$ to a terminal state $x(1)$ optimally, in the sense of Pareto optimality.

Multi-objective optimal control problems (MOCPs) have garnered significant attention due to their numerous applications in modern technology (see, for instance, \cite{Peitz}). As a result, MOCPs have been extensively studied by various researchers in recent years. For works related to the present study, we direct readers to \cite{Dmitruk1, Dmitruk2,  Kaya, Kien1, Kien3, Kien4, Kien5, Ngo, Olive, Olive2, Osmolovskiia,  Zhu} and the references therein. The main research topics in optimal control encompass the existence of optimal solutions, optimality conditions, and numerical methods for computing optimal solutions. In particular, the study of optimality conditions for multi-objective optimal control problems, as well as optimization problems in general, is a fundamental topic in optimization theory. First- and second-order optimality conditions of KKT type play a crucial role in numerical methods and the stability analysis of optimal solutions (see, for instance, \cite{Binh} and \cite{Girsanov}).

Recently, Kien et al. \cite{Kien3} derived first- and second-order necessary optimality conditions of KKT type, as well as second-order sufficient optimality conditions for multi-objective optimal control problems with free right endpoints. Furthermore, Kien et al. \cite{Kien4} established first- and second-order optimality conditions of KKT type for locally optimal solutions in the sense of Pareto for a class of multi-objective optimal problems with free right endpoint and free end-time. It is well-known that in optimal control problems without endpoint constraints, the Robinson constraint qualification is relatively easy to verify, allowing for the establishment of KKT-type optimality conditions. However, when problems include endpoint constraints, the situation becomes more complex, and verifying the Robinson constraint qualification becomes challenging. Consequently, establishing KKT-type optimality conditions becomes more difficult.

In the scalar case, several papers have addressed the normality of multipliers for problems with endpoint constraints. In \cite[Theorem 2]{Frankowska}, H. Frankowska presented a constraint qualification under which the necessary optimality conditions are normal. This constraint qualification is significant and has been studied in \cite[Theorem 9.3]{Bental} for mathematical programming problems. Additionally, V. Zeidan \cite{Zeidan} provided sufficient conditions for the normality of multipliers, demonstrating that if the linearized equation is controllable, then normality of multipliers is achieved.

The aim of this paper is to derive first- and second-order necessary optimality conditions of KKT type, as well as second-order sufficient conditions for MOCPs with endpoint constraints and mixed constraints. To address this problem, we first establish KKT optimality conditions for a specific vector optimization problem (VOP) in Banach space. The optimality conditions for VOP are established under the Robinson constraint qualification and proved directly via separation theorems and Taylor's expansion, without employing scalarization techniques or second-order tangent sets. We then derive optimality conditions for problem (P) by utilizing the results obtained for VOP. We demonstrate that if the linearized equation is controllable, then the Robinson constraint qualification is valid, ensuring that the multipliers are normal and the optimality conditions are of KKT type.

It is worth noting that although the abstract VOP serves as a good model for establishing optimality conditions for MOCPs, we cannot directly derive optimality conditions for MOCPs from those of VOP. Specifically, in MOCPs, the control variables typically belong to $L^\infty$, and consequently, the Lagrange multipliers corresponding to mixed constraints \eqref{P4} belong to $(L^\infty)^*$, which are additive measures rather than functions. Therefore, we require specific techniques and conditions to represent a functional on $(L^\infty)$ by an $L^1$-function. In \cite{Dmitruk1} and \cite{Dmitruk2}, A.V. Dmitruk and N.P. Osmolovskii imposed so-called regularity conditions on mixed constraints and employed the Yosida-Hewitt Theorem to represent a functional on $(L^\infty)^*$ by an $L^1$-function. However, in our opinion, these conditions are not easily verifiable. In our paper, we impose alternative conditions on mixed constraints that are more readily checkable. We then introduce a new technique to represent a functional on $L^\infty$ by an $L^1$-function and a novel method to derive adjoint equations.

In summary, the contributions of this paper are as follows: new results on KKT second-order necessary and sufficient conditions for Pareto solutions to MOCPs with endpoint constraints and mixed constraints, a new technique and result for establishing optimality conditions for abstract VOPs in Banach spaces with inclusion constraints, innovative techniques for representing a functional on $L^\infty$ by an $L^1$-function and deriving adjoint equations, and an application of these theoretical results to a practical problem in sustainable energy management for smart grids.

The remainder of the paper is structured as follows: In Section 2, we establish optimality conditions for VOPs in Banach spaces. Section 3 presents the main results on KKT optimality conditions for locally Pareto solutions to the MOCP. In Section 4, we apply our theoretical framework to a multiobjective optimal control problem for sustainable energy management in smart grids, demonstrating how our results can provide insights into the trade-offs between cost minimization, renewable energy utilization, environmental impact, and grid stability.

	\section{Vector optimization problem}
	\label{abstract-MP}
	Let $Z$, $E$ and $W$ be Banach spaces with the dual spaces $Z^*$, $E^*$ and $W^*$, respectively. We denote by $\tau(\mathbb{R}^m)$ the strong topology in $\mathbb{R}^m$, by $\sigma(E^*, E)$ the weak$^*$ topology in $E^*$ and by $\sigma(W^*, W)$ the weak$^*$ topology in $W^*$.  Given a Banach space $X$,  $B_X(x_0, r)$ stands for the open ball with center $x_0$ and radius $r$. Given a subset  $M$ in $X$. We shall denote by $\mathrm{int}M$ and $\overline{M}$ the interior and the closure of $M$, respectively. 
	
	Let $f=(f_1, f_2,..., f_k)\colon Z\to\mathbb{R}^k$,   $F\colon Z\to E$,  $G: Z\to W$ and $H=(H_1, H_2,..., H_n):Z\to \mathbb{R}^n$  be mappings  and $K$ be a nonempty convex set in $W$. 	 We consider the following vector optimization problem. 
$${\rm (VOP)}\left\{
\begin{array}{l}
f(z)\to \min \mbox{ s.t. }\\
F(z)=0,\; H(z)\leq 0\mbox{ and }G(z)\in K.
\end{array}
\right.
$$

We shall denote by $\Sigma$ the feasible set of  ${\rm (VOP)}$, that is,
	$$
	\Sigma=\{z\in Z\;|\; F(z)=0, H_i(z)\leq 0 \mbox{ and } G(z)\in K, i=1,2,..., n\}.
	$$ 
	To derive optimality conditions for ${\rm (VOP)}$ we shall need some tools and concepts  of variational analysis.	Let  $S$  be a  nonempty  set of $X$ and $\bar x\in	\bar S$. Then the set
	\begin{align*}
		T(S, \bar x)&:=\left\{h\in X\;|\; \exists t_n\to 0^+, \exists h_n\to h, \bar x+t_nh_n\in S, \; \forall n\in\mathbb{N}\right\},
	\end{align*}
	is called  {\it the contingent cone} to $S$ at $\bar x$. It is well-known that when $S$ is convex, then
	$$
 T(S, \bar x)=\overline{S(\bar x)},
        $$
	where 
	$$
	S(\bar x):=\mathrm{cone}\,(S-\bar x)=\{\lambda (h-\bar x)\;|\; h\in S, \lambda>0\}.
	$$ When $S$ is a convex set, the
	normal cone to $S$ at $\bar x$ is defined by
	\begin{equation*}
		N(S, \bar x):=\{x^*\in X^*\;|\; \langle x^*, x-\bar x\rangle\leq 0, \ \ \forall x\in S\},
	\end{equation*}
	or, equivalently,
	\begin{equation*}
		N(S, \bar x)=\{x^*\in X^*\;|\; \langle x^*, h\rangle\leq 0,\ \ \forall h\in T(S, \bar x)\}.
	\end{equation*}
Given a feasible point $z_0 \in \Sigma$,  we denote by $f'(z_0)$ or $Df(z_0)$ the first order derivative of $f$ at $z_0$ and by $f''(z_0)$ or $D^2 f(z_0)$ the second-order derivative of $f$ at $z_0$. Let us impose the following assumptions.

\begin{enumerate}
	\item[$(A1)$] ${\rm int} (K)\neq \emptyset$;
	
	\item[$(A2)$] The mappings $f, F$ and $G$ are of class $C^2$ around $z_0$;
	
	\item[$(A3)$] The range of $DF(z_0)$ is closed in $E$. 
	
\end{enumerate}
By defining $\tilde{K} = {0} \times (-\infty, 0]^n \times K$ and $\tilde{G} = (F, H, G)$, the constraint of $\text{(VOP)}$ is equivalent to the constraint $\tilde{G}(z) \in \tilde{K}$.
Recall that a feasible point  $z_0$ is said to satisfy the Robinson constraint qualification  if
\begin{align}\label{RobinsonCond1}
	0\in {\rm int} \{\tilde{G}(z_0) + D\tilde{G}(z_0)Z -\tilde{K}\}. 
\end{align} By Proposition 2.95 in \cite{Bonnans2000}, if $K$ is closed, then condition \eqref{RobinsonCond1} is equivalent to
\begin{align}\label{RobinsonCond2}
	D\tilde{G}(z_0)Z -{\rm cone}(\tilde{K}-\tilde{G}(z_0))=E\times\mathbb{R}^n\times W.  
\end{align} This means that for any $(e, \xi, w)\in E\times \mathbb{R}^n\times W$, there exist $z\in Z$, $v\in {\rm cone}((-\infty, 0]^n-H(z_0))$ and $v'\in {\rm cone}(K-G(z_0))$ such that 
\begin{equation} \label{RobinsonCond3}
		DF(z_0)z=e,\;
		DH(z_0)z-v=\xi \mbox{ and }
		DG(z_0)z-v'=w.
\end{equation}  
The Robinson constraint qualification guarantees that the necessary optimality conditions are of KKT type. Therefore, we require that ${\rm (VOP)}$ satisfies the following assumption.

\medskip

\noindent $(A4)$ $K$ is closed and  $z_0$ satisfies the Robinson constraint qualification \eqref{RobinsonCond2}.

Note that if $(A4)$ is satisfied, then $DF(z_0)$ is surjective. Hence $DF(z_0)Z=E$ and so $(A3)$ is fulfilled.  If we define
$
\hat K= (-\infty, 0]^n \times K,
$ then ${\rm int}(\hat K)\neq \emptyset$.  According to \cite[Corollary 2.101,  page 70]{Bonnans2000},  the following condition is equivalent to condition  \eqref{RobinsonCond1}.
\begin{align*}	
(A5)\quad \quad \quad
	\begin{cases}
		(i) \quad  DF(z_0) \  \ {\rm is \ surjective} \\
		(ii) \quad \exists \widetilde z \in {\rm Ker}(DF(z_0))\  {\rm such \ that}\  H_i(z_0)+  DH_i(z_0)\widetilde z <0\ \forall i=1,2,..., n\\
		\text{and}\  G(z_0) +  DG(z_0) \widetilde z  \in {\rm int}(K).
	\end{cases}
\end{align*}It is worth noting that $(A5)$ is a variant of the Mangasarian-Fromovitz condition.

\begin{definition}
The vector $z_0\in \Sigma$ is a locally weak Pareto solution of problem ${\rm (VOP)}$ if there exists $\epsilon>0$ such that $
f(z)-f(z_0)\notin - \mathrm{int}(\mathbb{R}^k_+) \quad \forall z\in B_Z(z_0, \epsilon)\cap \Sigma.$\\
If $f(z)-f(z_0)\notin - \mathbb{R}^k_+\setminus\{0\}$ for all $z\in B_Z(z_0, \epsilon)\cap \Sigma$, we say $z_0$ is a locally Pareto solution to ${\rm (VOP)}$.
\end{definition}

Obviously, a locally Pareto solution is also a locally weak Pareto solution. 

\noindent To deal with second-order optimality conditions, we need the so-called critical cone, which is defined as follows. Let $\mathcal{C}_0[z_0]$ be the set of vectors $d \in Z$ satisfying the conditions:
\begin{enumerate}[label=($a_{\arabic*}$)]
    \item $Df(z_0)d \in -\mathbb{R}^k_+$
    \item $DF(z_0)d = 0$
    \item $DH_i(z_0)d \leq 0$ for $i \in I_0$, where $I_0 := \{i \in \{1,2,\ldots,n\} : H_i(z_0) = 0\}$
    \item $DG(z_0)d \in \operatorname{cone}(K - G(z_0))$
\end{enumerate}
\noindent Then the closure of $\mathcal{C}_0[z_0]$ in $Z$ is called the critical cone at $z_0$ and denoted by $\mathcal{C}[z_0]$. Each vector $d \in \mathcal{C}[z_0]$ is called a critical direction.

\begin{lemma}\label{Lemma-key1} Suppose that $\bar z\in \Sigma$  is a locally weak  Pareto solution of ${\rm (VOP)}$ under which  $(A_1)$, $(A_2)$ and $(A3)$ are valid,  and $D F(z_0)$ is surjective.  Then for each  $d\in \mathcal{C}_0[z_0]$, the system
	\begin{align}
		&f'_i(z_0)z+\frac{1}2f''_i(z_0)d^2< 0, \ \ \forall i\in\{1,2,..., k\},  \label{PrimalOptimCond1}\\
		&H'_i(z_0)z+\frac{1}22H''_i(z_0)d^2< 0, \ \ \forall i\in I_0,  \label{PrimalOptimCond1+}\\
		&DF(z_0)z +\frac{1}2D^2 F(z_0)(d,d) =0, \label{PrimalOptimCond2}\\
		&D G(z_0)z+\frac{1}2D^2G(z_0))(d,d)\in {\rm cone}({\rm int} (K)-G(z_0)) \label{PrimalOptimCond3}
	\end{align}
	has no solution $z\in Z$.
\end{lemma}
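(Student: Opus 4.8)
The plan is to argue by contradiction. Suppose that for some $d\in\mathcal C_0[z_0]$ the system \eqref{PrimalOptimCond1}--\eqref{PrimalOptimCond3} admits a solution $z\in Z$. I will then construct a feasible second-order arc $t\mapsto \zeta(t)$ emanating from $z_0=\bar z$ along which every component of the objective strictly decreases, contradicting the local weak Pareto optimality of $z_0$. The natural ansatz is the parabolic path
\[
\zeta(t)=z_0+t\,d+t^2 z+r(t),\qquad t>0 \text{ small},
\]
where $r(t)=o(t^2)$ is a correction term, produced below, whose sole role is to restore exact feasibility of the equality constraint $F(\cdot)=0$. With this scaling, the Taylor coefficient of $t^2$ for any $C^2$ map $\Phi$ along $\zeta$ is exactly $D\Phi(z_0)z+\tfrac12 D^2\Phi(z_0)(d,d)$, which is precisely the combination appearing in each line of the system; this is what fixes the coefficient of $z$ to be $t^2$ rather than $\tfrac12 t^2$.

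First I would handle the equality constraint. Using $(A2)$, $F(z_0)=0$, condition $(a_2)$ (so $DF(z_0)d=0$) and \eqref{PrimalOptimCond2}, a second-order Taylor expansion gives $\|F(z_0+t d+t^2 z)\|=o(t^2)$. Because $DF(z_0)$ is surjective with closed range, the Graves--Lyusternik theorem yields metric regularity of $F$ at $z_0$, i.e. $\operatorname{dist}(y,F^{-1}(0))\le C\|F(y)\|$ for $y$ near $z_0$. Applying this to $y=z_0+td+t^2z$ produces $\zeta(t)\in F^{-1}(0)$ with $\|\zeta(t)-(z_0+td+t^2z)\|=o(t^2)$; this defines $r(t)=o(t^2)$ and guarantees $F(\zeta(t))=0$ for all small $t>0$. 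Since $r(t)=o(t^2)$, it contributes only $o(t^2)$ to every subsequent expansion and may be ignored at the orders that matter.

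Next I would verify the objective and the inequality constraints by a dichotomy between first- and second-order effects. Expanding $f_i$ along $\zeta$ gives $f_i(\zeta(t))-f_i(z_0)=t\,Df_i(z_0)d+t^2\big(Df_i(z_0)z+\tfrac12 D^2f_i(z_0)(d,d)\big)+o(t^2)$. By $(a_1)$ we have $Df_i(z_0)d\le 0$: if the inequality is strict the linear term dominates; if $Df_i(z_0)d=0$ the sign is decided by the $t^2$ coefficient, which is strictly negative by \eqref{PrimalOptimCond1}. In both cases $f_i(\zeta(t))<f_i(z_0)$ for small $t>0$, and as there are finitely many indices this holds simultaneously, i.e. $f(\zeta(t))-f(z_0)\in-\mathrm{int}(\mathbb R^k_+)$. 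The identical argument, now using $(a_3)$ together with \eqref{PrimalOptimCond1+}, gives $H_i(\zeta(t))<0$ for $i\in I_0$; for $i\notin I_0$ we have $H_i(z_0)<0$, so $H_i(\zeta(t))<0$ by continuity. Hence $H(\zeta(t))\le 0$.

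The main obstacle is the conic constraint $G(\zeta(t))\in K$, where the $o(t^2)$ remainder must be absorbed. Expanding, $G(\zeta(t))=G(z_0)+t\,p+t^2 q+e(t)$ with $p:=DG(z_0)d$, $q:=DG(z_0)z+\tfrac12 D^2G(z_0)(d,d)$ and $e(t)=o(t^2)$. By $(a_4)$, $p=\alpha\big(k_0-G(z_0)\big)$ for some $k_0\in K$, $\alpha\ge 0$, while \eqref{PrimalOptimCond3} gives $q=\beta\big(\hat k-G(z_0)\big)$ with $\hat k\in\mathrm{int}(K)$ and $\beta>0$. I will rewrite
\[
G(\zeta(t))=(1-t\alpha-t^2\beta)\,G(z_0)+t\alpha\,k_0+t^2\beta\Big(\hat k+\tfrac{1}{t^2\beta}\,e(t)\Big),
\]
a combination whose coefficients are nonnegative and sum to $1$ for small $t$. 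Choosing $\rho>0$ with $B_W(\hat k,\rho)\subset K$ (possible since $\hat k\in\mathrm{int}(K)$) and using $e(t)=o(t^2)$, we have $\|e(t)/(t^2\beta)\|\le\rho$ for small $t$, so $\hat k+\tfrac{1}{t^2\beta}e(t)\in K$; convexity of $K$ then yields $G(\zeta(t))\in K$. It is exactly here that the interior requirement in \eqref{PrimalOptimCond3}, rather than mere membership in $\operatorname{cone}(K-G(z_0))$, is indispensable, since it supplies the room needed to swallow the quadratic remainder. Collecting the three verifications, $\zeta(t)\in\Sigma\cap B_Z(z_0,\epsilon)$ for all small $t>0$ while $f(\zeta(t))-f(z_0)\in-\mathrm{int}(\mathbb R^k_+)$, contradicting the weak Pareto optimality of $z_0$. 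This shows the system is unsolvable.
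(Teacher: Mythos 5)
Your proof is correct and follows essentially the same route as the paper: the parabolic arc $z_0+td+t^2z$, the Ljusternik/Graves-type correction of order $o(t^2)$ to restore $F=0$, termwise Taylor expansions for $f_i$ and $H_i$, and the convex-combination argument using the interior point $\hat k\in\mathrm{int}(K)$ to absorb the remainder in the constraint $G(\zeta(t))\in K$. The only cosmetic differences are your explicit first-order/second-order dichotomy for the objective (the paper simply adds the two nonpositive terms) and your phrasing of the correction step via metric regularity rather than the Ljusternik theorem as cited in the paper; both are equivalent.
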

\begin{proof} On the contrary, we suppose that the system \eqref{PrimalOptimCond1}-\eqref{PrimalOptimCond3} has a solution $z\in Z$. For each $\epsilon>0$, we define $z_\epsilon= z_0 +\epsilon d +\epsilon^2 z.$ By Taylor expansion, we have 
	\begin{align*}
		F(z_\epsilon)= F(z_0) +\epsilon DF(z_0)d +	\epsilon^2(D F(z_0)z+\frac{1}2 D^2 F(z_0)(d,d))+ o(\epsilon^2)=o(\epsilon^2).
	\end{align*} Since $DF(z_0)Z=E$, the Ljusternik theorem (see \cite{Ioffe}, page 30) implies that, there exist a neighborhood $B(z_0, \gamma)$ of $z_0$ and a mapping $h: B(z_0, \gamma)\to Z$ such that for all $\hat z\in B(z_0, \gamma)$ we have 
	$$
	F(\hat z + h(\hat z))=0,\quad \|h(\hat z)\|\leq C\|F(\hat z)\|.
	$$ Taking $\hat z=z_\epsilon$ and $h_\epsilon=h(z_\epsilon)$, we see that, there exists $\epsilon_0>0$ such that 
	\begin{align}\label{Feas-Cond1}
		F(z_\epsilon + h_\epsilon)=0,\ \|h_\epsilon\|=o(\epsilon^2)\quad \text{for all}\quad 0<\epsilon<\epsilon_0.
	\end{align} Putting
	$$
	z_\epsilon'= z_\epsilon + h_\epsilon=z_0+\epsilon d +\epsilon^2 (z+\frac{1}{\epsilon^2}h_\epsilon)
	$$ and using a Taylor's expansion, we get	
	\begin{align*}
		f_i(z'_\epsilon)= f_i(z_0) +\epsilon f_i'(z_0)d +	\epsilon^2(f_i'(z_0)z+\frac{1}2f_i''(z_0)(d,d) +\frac{o(\epsilon^2)}{\epsilon^2})\quad\text{for}\quad 0<\epsilon<\epsilon_0. 
	\end{align*} Since 
	$$
	f_i'(z_0)z+\frac{1}2f_i''(z_0)(d,d)<0,
	$$ there exists $\epsilon_i\in (0, \epsilon_0)$ such that for all $\epsilon\in (0, \epsilon_i)$, we have 
	$$
	f_i'(z_0)z+\frac{1}2f_i''(z_0)(d,d) +\frac{o(\epsilon^2)}{\epsilon^2}<0.
	$$ From this and  the fact that $f_i'(z_0)d\leq 0$, we have 
	\begin{align*}
		f_i(z'_\epsilon)-f_i(z_0)<0 \quad \forall \epsilon <\epsilon_i,\  i=1,2,..., m.
	\end{align*} Let $\epsilon'=\min\{\epsilon_1,..., \epsilon_m\}$.  It follows that
	\begin{align}\label{Pareto-Cond2}
		f(z'_\epsilon)-f(z_0)\in -{\rm int}\mathbb{R}_{+}^m \quad \text{for}\quad 0<\epsilon<\epsilon'. 
	\end{align} Also by  a Taylor's expansion, for each $i\in\{1,2,..., n\}$ we have
\begin{align*}
H_i(z'_\epsilon)= H_i(z_0) +\epsilon H'_i(z_0)d +	\epsilon^2(H'_i(z_0)z+\frac{1}2H''_i(z_0)(d,d) +\frac{o(\epsilon^2)}{\epsilon^2})\quad\text{for}\quad 0<\epsilon<\epsilon'. 
\end{align*} If $i\in I_0$ then  
$$
 H'_i(z_0)z+\frac{1}2 H''_i(z_0)(d,d)<0,
$$ and $H_i(z_0) =0$. Therefore, there exists $\epsilon'_i\in (0, \epsilon')$ such that for all $\epsilon\in (0, \epsilon'_i)$ one has 
$$
H'_i(z_0)z+\frac{1}2H''_i(z_0)(d,d) +\frac{o(\epsilon^2)}{\epsilon^2}<0.
$$ Hence 
$$
H_i(z'_\epsilon)\leq  +\epsilon H'_i(z_0)d \leq 0\quad \forall \epsilon\in (0, \epsilon'_i). 
$$ If $i\notin I_0$ then $H_i(z_0)<0$. By the continuity of $H_i$,   there exists $\epsilon''_i<\epsilon_i'$ such that $H_i(z'_\epsilon)<0$ for all $\epsilon\in (0, \epsilon_i'')$.  Hence $H_i(z'_\epsilon)<0$ for all $i\in\{1,2,..., n\}$ and $\epsilon\in(0, \epsilon'')$ with $\epsilon''=\min(\epsilon_1'',..., \epsilon_n'')$.  Besides,  \eqref{Pareto-Cond2} is valid for all $\epsilon\in (0, \epsilon'')$.  Using a Taylor's expansion again, we have 
	\begin{align*}
		G(z'_\epsilon)= G(z_0) +\epsilon D G(z_0)d +	\epsilon^2\big(D G(z_0)z+\frac{1}2D^2 G(z_0)(d,d) +\frac{o(\epsilon^2)}{\epsilon^2}\big).	
	\end{align*} Since $D G(z_0)d\in {\rm cone}(K-G(z_0))$, there exist $\lambda_1>0$ and $v_1\in K$ such that $\nabla G(z_0)d=\lambda_1(v_1-G(z_0))$ for some $v_1\in K$. Also, since 
	$$
	D G(z_0)z+\frac{1}2 D^2 G(z_0)(d,d)\in {\rm cone}({\rm int}(K)-G(z_0)),
	$$ there exist $\lambda_2>0$ and $v_2\in{\rm int}(K)$ such that 
	$$
	DG(z_0)z+\frac{1}2D^2 G(z_0)(d,d)=\lambda_2(v_2-G(z_0)). 
	$$ Hence,
	\begin{align*}
		G(z'_\epsilon)&=G(z_0)+\epsilon \lambda_1(v_1-G(z_0)) + \epsilon^2 \lambda_2(v_2-G(z_0))+ o(\epsilon^2)\\
		&=(1-\epsilon\lambda_1-\epsilon^2\lambda_2)G(z_0)+\epsilon\lambda_1 v_1 +\epsilon^2\lambda_2\big(v_2 +\frac{o(\epsilon^2)}{\lambda_2\epsilon^2}\big).
	\end{align*} Since $ v_2\in {\rm int}(K)$, there exists $\epsilon_*\in (0, \epsilon'')$  such that $v_2 +\frac{o(\epsilon^2)}{\lambda_2\epsilon^2}\in {\rm int}(K)$ and $1-\epsilon\lambda_1-\epsilon^2\lambda_2>0$ for all $\epsilon\in (0, \epsilon_*)$. This implies that 
	\begin{align*}
		G(z'_\epsilon)&\in (1-\epsilon\lambda_1-\epsilon^2\lambda_2)K + \epsilon\lambda_1 K +\epsilon^2\lambda_2 K\\
		&\subset (1-\epsilon\lambda_1-\epsilon^2\lambda_2)K + (\epsilon\lambda_1 +\epsilon^2\lambda_2)( \frac{\epsilon\lambda_1}{\epsilon\lambda_1 +\epsilon^2\lambda_2} K + \frac{\epsilon^2\lambda_2}{\epsilon\lambda_1 +\epsilon^2\lambda_2}K)\\
		&\subset (1-\epsilon\lambda_1-\epsilon^2\lambda_2)K + (\epsilon\lambda_1 +\epsilon^2\lambda_2) K\subset K. 
	\end{align*} Combining this with \eqref{Feas-Cond1} and the fact $H(z'_\epsilon)\leq 0$, we have $z'_\epsilon\in\Sigma$ for all $\epsilon\in (0, \epsilon_*)$. By this and   \eqref{Pareto-Cond2}, we conclude that $z_0$ is not a locally weak  Pareto solution of ${\rm (VOP)}$ which is absurd. The lemma is proved.
\end{proof}
\medskip
 Let us assume that 
\begin{align}
	{\mathcal L}\left( z, \lambda, e^*, l, w^* \right) = \lambda^T f(z) + \langle e^*, F(z)\rangle + l^T H(z)+ \langle w^*, G(z)\rangle 
\end{align} is  the Lagrange function  associated with the problem ${\rm (VOP)}$, where $\lambda\in\mathbb{R}^k$, $e^* \in E^*$, $l\in\mathbb{R}^n$ and $w^* \in W^*$. We say that vector $(\lambda, e^*, l, w^*)\in \mathbb{R}^k\times E^*\times\mathbb{R}^n\times W^*$ are Lagrange multipliers at $z_0$ if the following conditions are fulfilled:
\begin{align}
	&D_z  {\mathcal L}\left( z_0,\lambda,  e^*, l,  w^* \right)= \lambda^T Df(z_0) + DF(z_0)^*e^* + DH(z_0)^* l^T+ DG(z_0)^* w^*=0,\label{L1}\\
	&\lambda\geq 0,\label{L2}\\
	& l_i\geq 0,\ l_i H_i(z_0)=0, i=1,2,.., n,\label{L3}\\
	& w^*\in N(K, G(z_0)),\label{L4}
\end{align} where $l=(l_1, l_2,..., l_n)$. We denote by $\Lambda[z_0]$ the set of Lagrange multipliers at $z_0$. In addition,  if $|\lambda|=1$, then we say $(\lambda, e^*, l, w^*)$ are normal. We shall denote by $\Lambda_*[z_0]$ the set of normal Lagrange multipliers at $z_0$.\\
We now have the following result on first and second-order necessary optimality conditions for ${\rm (VOP)}$.
\begin{theorem}\label{Theorem-SONC-VOP} Suppose that $z_0$ is a local weak Pareto solution to ${\rm (VOP)}$. Then   the following assertions are fulfilled:
\noindent $(a)$ If $(A1)- (A3)$ are satisfied, then  $\Lambda[z_0]$ is nonempty and  
\begin{align}\label{VOP-SOC1}
	\sup_{(\lambda, e^*,l,  w^*)\in\Lambda[z_0]} D^2_z\mathcal{L}(z_0, \lambda, e^*, l,  w^*)[d,d]\geq 0,\quad \forall d\in \mathcal{C}_0[z_0].   
\end{align}
\noindent $(b)$ If $(A1), (A2)$ and $(A4)$ or $(A1), (A2)$ and $(A5)$ are satisfied, then $\Lambda_*[z_0]$  is nonempty and compact  in the topology $\sigma(\mathbb{R}^k, \mathbb{R}^k)\times\sigma(E^*, E)\times\sigma(W^*, W)$,  and  
\begin{align}\label{VOP-SOC2}
\max_{(\lambda, e^*,l,  w^*)\in\Lambda_*[z_0]} D^2_z\mathcal{L}(z_0, \lambda, e^*, l,  w^*)[d,d]\geq 0,\quad \forall d\in \mathcal{C}[z_0].   
\end{align}
\end{theorem}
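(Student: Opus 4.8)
The plan is to turn the primal infeasibility of Lemma~\ref{Lemma-key1} into multipliers by a separation argument, and then to read the second-order inequality off the separating functional. Fix a critical direction $d\in\mathcal{C}_0[z_0]$ and abbreviate the half second-order data $a=\frac{1}{2}f''(z_0)(d,d)\in\mathbb{R}^k$, $b=\frac{1}{2}H''_{I_0}(z_0)(d,d)\in\mathbb{R}^{|I_0|}$, $c=\frac{1}{2}D^2F(z_0)(d,d)\in E$, $e=\frac{1}{2}D^2G(z_0)(d,d)\in W$, with $I_0$ the active set. For part $(a)$ I would first assume $DF(z_0)$ surjective (the hypothesis of Lemma~\ref{Lemma-key1}); solving $DF(z_0)z+c=0$ by a particular $z_p$ and writing $z=z_p+\zeta$ with $\zeta\in V:={\rm Ker}\,DF(z_0)$ recasts the forbidden system as the disjointness of the affine image $\{(Df(z_0)\zeta,DH_{I_0}(z_0)\zeta,DG(z_0)\zeta):\zeta\in V\}+\mathrm{const}$ from the open convex cone $(-{\rm int}\,\mathbb{R}^k_+)\times(-{\rm int}\,\mathbb{R}^{|I_0|}_+)\times{\rm cone}({\rm int}(K)-G(z_0))$.

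A Hahn--Banach separation then gives a nonzero $(\lambda,l,w^*)$; since it must be bounded below on the affine set it annihilates the subspace $\{(Df(z_0)\zeta,DH_{I_0}(z_0)\zeta,DG(z_0)\zeta):\zeta\in V\}$, so $\lambda^TDf(z_0)+l^TDH_{I_0}(z_0)+DG(z_0)^*w^*$ vanishes on $V$, and since $DF(z_0)$ has closed range the closed-range theorem supplies $e^*\in E^*$ realizing the stationarity relation \eqref{L1}; the recession directions of the target cone force $\lambda\ge0$, $l_i\ge0$ for $i\in I_0$ (set $l_i:=0$ otherwise), and $w^*\in N(K,G(z_0))$, i.e.\ \eqref{L2}--\eqref{L4}, so $(\lambda,e^*,l,w^*)\in\Lambda[z_0]$. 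Evaluating the separating functional at the base point $\zeta=0$ gives $\langle(\lambda,l,w^*),m_0\rangle\ge\beta\ge0$; substituting \eqref{L1} applied to $z_p$ together with $DF(z_0)z_p=-c$ collapses the linear terms and leaves precisely $\frac{1}{2}D^2_z\mathcal{L}(z_0,\lambda,e^*,l,w^*)[d,d]$, so $D^2_z\mathcal{L}[d,d]\ge0$ and \eqref{VOP-SOC1} holds, the choice $d=0$ already giving $\Lambda[z_0]\neq\emptyset$. If instead $DF(z_0)$ is not surjective, then by $(A3)$ its range is a proper closed subspace and Hahn--Banach gives $e^*\neq0$ with $DF(z_0)^*e^*=0$, so $(0,e^*,0,0)\in\Lambda[z_0]$; here the only surviving second-order term is $\langle e^*,D^2F(z_0)(d,d)\rangle$, which by an appropriate sign choice of the annihilator is $\ge0$ (and $=0$ when $D^2F(z_0)(d,d)$ lies in the range), again yielding \eqref{VOP-SOC1}.

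For part $(b)$, $(A4)$ or $(A5)$ makes $DF(z_0)$ surjective, so the construction above applies and $d=0$ produces a nonzero multiplier. Normality and boundedness then both follow from a single estimate: pairing \eqref{L1} with the Mangasarian--Fromovitz direction $\widetilde z\in{\rm Ker}\,DF(z_0)$ of $(A5)$ annihilates the $e^*$-term and gives $\lambda^TDf(z_0)\widetilde z=\sum_{i\in I_0}l_i\big(-DH_i(z_0)\widetilde z\big)-\langle w^*,DG(z_0)\widetilde z\rangle$; since $-DH_i(z_0)\widetilde z\ge\delta>0$ and, as $G(z_0)+DG(z_0)\widetilde z\in{\rm int}(K)$, membership $w^*\in N(K,G(z_0))$ forces $-\langle w^*,DG(z_0)\widetilde z\rangle\ge\rho\|w^*\|$, this bounds $\sum_i l_i$ and $\|w^*\|$ by a constant times $|\lambda|$. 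Consequently $\lambda=0$ forces $l=0$, $w^*=0$, and then $e^*=0$ (as $DF(z_0)^*$ is injective), contradicting nontriviality; hence $\lambda\neq0$, the multiplier normalizes into $\Lambda_*[z_0]$, and with $|\lambda|=1$ the same estimate bounds $l,w^*$ and (since $DF(z_0)^*$ is bounded below) $e^*$ uniformly. As $\Lambda_*[z_0]$ is also weak$^*$-closed—the adjoints $DF(z_0)^*,DG(z_0)^*$ are weak$^*$--weak$^*$ continuous and $N(K,G(z_0))$ is weak$^*$-closed—it is compact by Banach--Alaoglu.

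Finally, for each $d\in\mathcal{C}_0[z_0]$ the $d$-dependent multiplier from part $(a)$ is now normal, so after normalization (which preserves the sign of $D^2_z\mathcal{L}[d,d]$) we obtain $\max_{\Lambda_*[z_0]}D^2_z\mathcal{L}[d,d]\ge0$, the maximum attained because $(\lambda,e^*,l,w^*)\mapsto D^2_z\mathcal{L}(z_0,\lambda,e^*,l,w^*)[d,d]$ is weak$^*$-continuous on the compact set $\Lambda_*[z_0]$. Since this quantity is continuous in $d$ as well, $d\mapsto\max_{\mu}D^2_z\mathcal{L}(z_0,\mu)[d,d]$ is continuous, so the inequality propagates from $\mathcal{C}_0[z_0]$ to its closure $\mathcal{C}[z_0]$, giving \eqref{VOP-SOC2}. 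I expect the two genuine obstacles to be the non-open equality block $\{0_E\}$ in the separation—resolved by reducing to ${\rm Ker}\,DF(z_0)$ and invoking the closed-range theorem—and the boundedness and normality of $\Lambda_*[z_0]$, which the explicit estimate along the direction $\widetilde z$ from $(A5)$ settles.
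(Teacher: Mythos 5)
Your proposal is correct, and it reproduces the paper's overall architecture (Lemma \ref{Lemma-key1} as the primal infeasibility statement, a separation argument to produce multipliers, normality from the Mangasarian--Fromovitz direction, then compactness plus a continuity argument to pass from $\mathcal{C}_0[z_0]$ to $\mathcal{C}[z_0]$), but your implementation of the central separation step is genuinely different. The paper keeps the equality constraint inside the separation: it builds an auxiliary convex set $S\subset\mathbb{R}^k\times\mathbb{R}^{n_0}\times E\times W$ of slack right-hand sides for the system of Lemma \ref{Lemma-key1}, proves that $S$ is open (the openness of the $E$-block is exactly where surjectivity of $DF(z_0)$ enters, via the open mapping theorem), and separates $S$ from the origin, obtaining all four components $(\lambda,l,e^*,w^*)$ in one stroke; the sign conditions, the stationarity relation \eqref{Saperation5} and the second-order inequality \eqref{Saperation4} are then extracted by letting the slack parameters tend to their limits. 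You instead eliminate the equality block first: you solve $DF(z_0)z_p=-\tfrac12 D^2F(z_0)d^2$, separate the affine image of $\operatorname{Ker}DF(z_0)$ from the open cone $(-\mathrm{int}\,\mathbb{R}^k_+)\times(-\mathrm{int}\,\mathbb{R}^{|I_0|}_+)\times\mathrm{cone}(\mathrm{int}(K)-G(z_0))$ in the smaller space, and recover $e^*$ a posteriori from the closed-range identity $(\operatorname{Ker}DF(z_0))^{\perp}=\operatorname{Ran}(DF(z_0)^*)$; evaluating the separating functional at the base point then yields $\tfrac12 D^2_z\mathcal{L}(z_0,\lambda,e^*,l,w^*)[d,d]\ge 0$ directly. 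Both routes use surjectivity in an essential way; yours trades the openness verification of $S$ for the closed-range theorem and makes the second-order inequality drop out more transparently. Your part $(b)$ is also tighter than the paper's: the single quantitative estimate $\delta\sum_{i\in I_0}l_i+\rho\|w^*\|\le\lambda^TDf(z_0)\widetilde z\le C|\lambda|$ (with $\rho$ coming from pairing $w^*\in N(K,G(z_0))$ against the interior ball around $G(z_0)+DG(z_0)\widetilde z\in\mathrm{int}(K)$) delivers normality and uniform boundedness simultaneously, whereas the paper proves normality by two separate qualitative case analyses (one under $(A4)$ via \eqref{RobinsonCond3}, one under $(A5)$) and outsources boundedness to Lemma 3.3 of \cite{Kien3}; your closing compactness and Berge-type continuity steps coincide with the paper's. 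One point you should make explicit: when only $(A4)$ is assumed, the direction $\widetilde z$ your estimate relies on exists because $(A4)$ implies $(A5)$ (Proposition 2.95 together with Corollary 2.101 of \cite{Bonnans2000}, as recorded in the paper before the theorem); your argument uses this equivalence silently, while the paper avoids it by arguing directly from \eqref{RobinsonCond3} in the $(A4)$ case.
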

\begin{proof} $(a)$. We take any $d\in \mathcal{C}_0[z_0]$ and consider two cases.
	 
\noindent {\it Case 1}. $D F(z_0)Z\neq E$. Then there is a point $e_0\in E$ and $e_0\notin DF(z_0)Z$. Since $DF(z_0)Z$ is a closed subspace,  the separation theorem (see \cite[Theorem 3.4]{Rudin}) implies that there exists a nonzero functional $e^*\in E^*$ which separate $e_0$ and $\nabla F(z_0)Z$. By a simple argument, we see that $DF(z_0)^* e^*=0$. Putting $\lambda=0, l=0, w^*=0$, we get $(0, e^*, 0, 0)\in\Lambda(z_0)$. If $e^*DF(z_0)d^2\leq 0$, then we replace $e^*$ by $-e^*$. Then $(0, -e^*, 0, 0)$ satisfies the conclusion of $(a)$. 
	
\noindent {\it Case 2}. $D F(z_0)Z= E$.

Let $n_0=|I_0|$. We define a set $S$ which consists of vectors $(\mu, \gamma,  e, w)\in\mathbb{R}^k\times\mathbb{R}^{n_0}\times E\times W$ such that there exists $z\in Z$ satisfying
\begin{align}
	&f'_i(z_0)z+\frac{1}2f''_i(z_0)d^2<\mu_i, i=1,2,.., k\\
	&H'_j(z_0)z+\frac{1}2H''_j(z_0)d^2<\gamma_j, j\in I_0,\\
	&D F(z_0)z+\frac{1}2 D^2F(z_0)d^2=e \label{C1}\\
	&D G(z_0)z+ \frac{1}2 D^2G(z_0)d^2 -w\in{\rm cone}({\rm int}(K)-G(z_0)). 
\end{align} It is clear that $S$ is convex. We now show that $S$ is open. In fact, take $(\hat\mu, \hat\gamma, \hat e,\hat w)\in S$ corresponding to $\hat z$. Choose $\epsilon>0$ such that 

$$
f'_i(z_0)\hat z+\frac{1}2f''_i(z_0)d^2<\hat\mu_i-\epsilon,\quad\forall i=1,2,.., k,\quad
H'_j(z_0)\hat z+\frac{1}2H''_j(z_0)d^2<\hat\gamma_j-\epsilon,\quad \forall j\in I_0. 
$$
Then there is $\delta>0$ such that  for all $z\in B(\hat z, \delta)$ one has

$$
f'_i(z_0)z+\frac{1}2f''_i(z_0)d^2<\hat\mu_i-\epsilon\quad\forall i=1,2,.., k,\quad H_j'(z_0)z+\frac{1}2H''_j(z_0)d^2<\hat\gamma_j-\epsilon\quad \forall j\in I_0. 
$$

 It follows that  
\begin{align*}
&f'_i(z_0)z+\frac{1}2f''_i(z_0)d^2<\mu_i,\quad\forall z\in B(\hat z, \delta)\ {\rm and}\ |\mu_i-\hat\mu_i|<\epsilon,  i=1,2,.., k,\\
&H'_j(z_0)z+\frac{1}2H''_j(z_0)d^2<\gamma_j,\quad \forall z\in B(\hat z, \delta)\ {\rm and}\ |\gamma_i-\hat\gamma_i|<\epsilon, j\in I_0. 
\end{align*}
Since ${\rm cone}({\rm int} K-G(z_0))$ is an open convex cone and
$$
DG(z_0)\hat z+\frac{1}2 D^2G(z_0)d^2-\hat w \in {\rm cone}({\rm int}(K)-G(z_0)),
$$ the continuity of $DG(z_0)$ implies that there exit  balls  $B(\hat z, \delta)$ and   $B(\hat w, r)$  such that 
$$
DG(z_0)z +\frac{1}2 D^2G(z_0)d^2 -w \in {\rm cone}({\rm int}(K)-G(z_0))\quad \forall (z,w)\in B(\hat z, \delta)\times B(\hat w, r).
$$ Since $DF(z_0)$ is surjective, $DF(z_0)[B(\hat z, \delta)]+\frac{1}2 D^2F(z_0)d^2$ is open. Hence, there exists a number $\alpha>0$ such that 
$$
B(\hat e, \alpha)\subset DF(z_0)[B(\hat z, \delta)]+ \frac{1}2 D^2G(z_0)d^2. 
$$ Thus, for any $(\mu,\gamma,  e, w)\in  B_{\mathbb{R}^k}(\hat \mu, \epsilon)\times B_{\mathbb{R}^{n_0}}(\hat\gamma, \epsilon)\times B_E(\hat e, \alpha)\times B_W(\hat w, r)$, there exists $z\in B(\hat  z, \delta)$ such that 

\begin{align*}
	&f'_i(z_0)z+\frac{1}2f''_i(z_0)d^2<\mu_i,\;\forall   i=1,2,.., k,\quad H'_j(z_0)z+\frac{1}2H''_j(z_0)d^2<\gamma_j\quad \forall   j\in I_0.\\
	&e=DF(z_0)z +\frac{1}2D^2F(z_0)d^2,\quad DG(\bar z)z +\frac{1}2D^2G(z_0)d^2 -w\in {\rm cone}({\rm int}(K)-G(\bar z)).
\end{align*}
This means
$$
 B_{\mathbb{R}^k}(\hat \mu, \epsilon)\times B_{\mathbb{R}^{n_0}}(\hat\gamma, \epsilon)\times B_E(\hat e, \alpha)\times B_W(\hat w, r) \subset S.
$$ Hence $S$ is open.  By Lemma \ref{Lemma-key1}, we have  $(0, 0, 0, 0)\notin S$. By  the separation theorem (see \cite[Theorem 1, p. 163]{Ioffe},  there exists a nonzero vector 
$(\lambda, l, e^*,  w^*)\in \mathbb{R}^k\times\mathbb{R}^{n_0}\times E^*\times \times W^*$ such that 
\begin{align}\label{Seperation2}
	\lambda\mu^T + l\gamma \langle e^*, e\rangle  +\langle w^*, w\rangle\geq 0\quad \forall (\mu, \gamma, e,  w)\in S. 
\end{align} 
Fix any $z\in  Z$,  $w'\in{\rm cone }({\rm int }K-G(z_0))$,  $r_i>0$ and $r'_j>0$ . Set

\begin{align*}
	& \mu_i=r_i+ f'_i(z_0)z +\frac{1}2 f''_i(z_0)d^2,\quad \gamma_j=r_j'+  H'_j(z_0)z +\frac{1}2H''_j(z_0)d^2,\\
	&e=D F(z_0)z +\frac{1}2D^2 F(z_0)d^2,\quad  w=D G(z_0)z+ \frac{1}2D^2 G(z_0)d^2-w'
\end{align*} 
Then $(\mu,\gamma,  e, w)\in S$. From this and  \eqref{Seperation2}, we get
\begin{align*}
&\sum_{i=1}^k\lambda_i(r_i+ f'_i(z_0)z +\frac{1}2 f''_i(z_0)d^2)+ \sum_{j\in I_0} l_j(r'_j+ H'_j(z_0)z +\frac{1}2 H''_j(z_0)d^2)\\
&+ \langle e^*, DF(z_0)z +\frac{1}2D^2 F(z_0)d^2\rangle +\langle w^*, DG(z_0)z +\frac{1}2 D^2G(z_0)d^2\rangle -\langle w^*, w'\rangle\geq 0.
\end{align*} If there exists $i_0$ such that $\lambda_{i_0}<0$ then by letting $r_{i_0}\to+\infty$, the term on the left hand side approach to $-\infty$ which is impossible. Hence we must have $\lambda_i\geq 0$ for all $i=1,2,.., k$. Similarly, we have $l_j\geq 0$ for all $j\in I_0$. 

By letting $r_i\to 0$ and $r_j'\to 0$,  we get 
\begin{align}\label{Seperation3}
	\sum_{i=1}^k\lambda_i f'_i(z_0)+\sum_{j\in I_0}l_jH'_j(z_0) + DF(z_0)^* e^* + DG(z_0)^* w^*=0
\end{align} and 
$$
\frac{1}2\big(\sum_{i=1}^k\lambda_i f''_i(z_0)d^2+ \sum_{j\in I_0} l_i H''_i(z_0) d^2+  \langle e^*, D^2F(z_0)d^2\rangle +
 \langle w^*, D^2G(z_0)d^2\rangle \big)\geq \langle w^*, w'\rangle 
$$ for all $w'\in{\rm cone}({\rm int}(K)-G(z_0))$. It follows that 
$$
\langle w^*, w'\rangle\leq 0,  \forall w'\in{\rm cone}({\rm int}(K)-G(z_0)).
$$ This implies that $w^*\in N(K, G(z_0))$ because $ K\subseteq\bar K=\overline{{\rm int}(K)}$.  By letting $w'\to 0$, we get 
\begin{align}\label{Saperation4}
\sum_{i=1}^k\lambda_if''_i(z_0)d^2+ \sum_{j\in I_0} l_j H''_i(z_0) d^2+  \langle e^*,D^2F(z_0)d^2\rangle +
 \langle w^*, D^2G(z_0)d^2\rangle \geq 0.
\end{align} We now take $l_j=0$ for $j\in\{1,2,..., n\}\setminus I_0$. Then we have $l_j\geq 0$ and $l_j H_j(z_0)=0$ for all $j=1,2,.., n$ and \eqref{Seperation3} and \eqref{Saperation4} become
\begin{align} \label{Saperation5}
	\sum_{i=1}^k\lambda_i f'_i(z_0)+\sum_{j=1}^n l_jH'_j(z_0) + DF(z_0)^*e^* + DG(z_0)^*w^*=0
\end{align} and 
\begin{align}\label{Saperation6}
	\sum_{i=1}^k\lambda_if''_i(z_0)d^2+ \sum_{j=1}^n l_jH''_i(z_0) d^2+  \langle e^*. D^2F(z_0)d^2\rangle +
	\langle w^*, D^2G(z_0)d^2\rangle \geq 0.
\end{align} Assertion $(a)$ is proved. 

\noindent $(b)$. We claim that $\lambda\neq 0$. For this we consider two cases. 

\noindent {\it Case 1.} $(A1), (A2)$ and $(A4)$ are satisfied.  

If  $\lambda=0$, then we have 
$$
DH(z_0)^* l^T+ DF(z_0)^* e^* + DG(z_0)^* w^*= 0.
$$ Take any $\xi\in\mathbb{R}^n$, $e\in E$ and $w\in W$. By $(A4)$, there exist $z\in Z$, $v\in {\rm cone}((-\infty, 0]^n-H(z_0))$ and $v'\in {\rm cone}(K-G(z_0))$ satisfying \eqref{RobinsonCond3}. From the above, we have 
\begin{align*}
0&= l^TDH(z_0)z + \langle e^*, DF(z_0)z\rangle + \langle w^*, DG(z_0)z\rangle = l^T \xi + l^T v + \langle e^*, e\rangle +\langle w^*, w\rangle +\langle w^*, v'\rangle\\
& \leq l^T \xi +  \langle e^*, e\rangle +\langle w^*, w\rangle. 
\end{align*} Here we used the fact that $w^*\in N(K, G(z_0))$ and $l\in N((-\infty, 0]^n, H(z_0))$. As $\xi, e, w$ are arbitrary, we obtain $l=0, e^*=0$ and $w^*=0$ which is impossible. Hence we must have $\lambda\neq 0$ and so $\Lambda_*[z_0]\neq \emptyset$.  

\noindent {\it Case 2}. $(A1), (A2)$ and $(A5)$ are fulfilled. 

If $\lambda=0$, then we have 
$$
DH(z_0)^* l^T+ DF(z_0)^* e^* + DG(z_0)^* w^*= 0.
$$ Let $\tilde z$ be a vector  satisfying  $(A5)$. Then we have 
$$
l^TDH(z_0)\tilde z+  \langle e^*,  DF(z_0)\tilde z\rangle   + \langle w^*,  DG(z_0)^* \tilde z\rangle =0.
$$  This implies that 
$$
l^T DH(z_0)\tilde z + \langle w^*, DG(z_0) \tilde z\rangle=0,  
$$ where
$$
DH(z_0)\tilde z\in (-\infty, 0)^n -H(z_0)={\rm int}( (-\infty, 0]^n-H(z_0))
$$ and 
$$
DG(z_0) \tilde z \in {\rm int}(K)- G(z_0)={\rm int}(K-G(z_0)). 
$$ Fixing any $\xi\in\mathbb{R}^n$ and $w\in W$, we see that, there exist $\tau>0$ and  $s>0$ small enough such that 
$$
\tau\xi + DH(z_0)\tilde z \in (-\infty, 0]^n-H(z_0)
\mbox{ and }
sw+ DG(z_0) \tilde z \in K-G(z_0).
$$  Since $l\geq 0$, $l^T H(z_0)=0$ and $w^*\in N(K, G(z_0))$, we have 
$$
\tau l^T\xi + s\langle w^*, w\rangle=l^T(\tau \xi+ DH(z_0)\tilde z)+\langle w^*, sw+ DG(z_0) \tilde z \rangle\leq 0.  
$$ Hence $l^T\xi\leq 0$ for all $\xi\in\mathbb{R}^n$ and  $\langle w^*, w\rangle \leq 0$ for all $w\in W$. This implies that $l=0$ and  $w^*=0$. Consequently, $DF(z_0)^* e^*=0$. Since $DF(z_0)$ is surjective, we get $e^*=0$. Hence $(\lambda, e^*, l,  w^*)=(0,0,0, 0)$ which is absurd. The claim is justified. 

The boundeness of $\Lambda_*[z_0]$ is proved analogously with Lemma 3.3 in \cite{Kien3}.  It remains to prove \eqref{VOP-SOC2}. Let us consider function
$$
\psi(\lambda,  e^*, l,  w^*, d):=D_z^2\mathcal{L}(z_0, \lambda, e^*, l, w^*)(d,d).
$$ It is continuous in the topology $\tau(\mathbb{R}^k)\times\sigma(E^*, E)\times\tau(\mathbb{R}^n)\times\sigma(W^*, W)\times \tau(Z)$, where $\tau(\mathbb{R}^k)$ and $\tau(Z)$ are strong topologies in $\mathbb{R}^k$ and $Z$, respectively. As $\Lambda_*[z_0]$ is compact in the topology $\tau(\mathbb{R}^k)\times\sigma(E^*, E)\times \tau(\mathbb{R}^n)\times\sigma(W^*, W)$,  the function 
$$
d\mapsto \max_{(\lambda, e^*,l, w^*)\in\Lambda (z_0)}D_z^2\mathcal{L}(z_0, \lambda, e^*, l, w^*)(d, d)
$$ is continuous (see \cite[Theorem 1 and 2, p. 115]{Berge}). Let  $d\in \mathcal{C}[z_0]$. Then there exists a sequence $(d_j)\subset \mathcal{C}_0[z_0]$ such that $d_j\to d$. By assertion $(a)$ we have \begin{align*}
	\max_{(\lambda,  e^*,l,  w^*)\in\Lambda_*[z_0]}D_z^2\mathcal{L}(z_0, \lambda, e^*, l,  w^*)(d_j, d_j)\geq 0.
\end{align*} Passing to the limit, we obtain 
\begin{align*}
	\max_{(\lambda, e^*, l, w^*)\in\Lambda_*[z_0]}D_z^2\mathcal{L}(z_0, \lambda, e^*,l,  w^*)(d, d)\geq 0.
\end{align*}The proof of Theorem \ref{Theorem-SONC-VOP} is thereby completd. 
\end{proof}

\begin{remark}{\rm  In comparison with the obtained results in \cite{Bental}, Theorem \ref{Theorem-SONC-VOP} is an extension of \cite[Theorem 8.2]{Bental}. In \cite{Bental} the author considered problem {\rm (VOP)} without constraint $H(z)\leq 0$ and required that $K$ is a cone.  Here we consider problem {\rm (VOP)} with constraint $H(z)\leq 0$ and $K$ is a  convex set. Besides, our techniques for the proof of Theorem \ref{Theorem-SONC-VOP} is very different from those in \cite{Bental}. 
}	
\end{remark}

\section{Main results}

Let  $W^{1,p}([0, 1], \mathbb{R}^n)$ with $p\geq 1$ be a  Banach space of absolutely continuous vector-valued functions $x \colon [0, 1]\to\mathbb{R}^n$ such that $\dot x\in L^p([0,1], \mathbb{R}^n)$ with the norm $\|x\|_{1,p}:=|x(0)| +\|\dot x\|_p$, where  for each $\xi=[\xi_1, \ldots, \xi_n]^T\in\mathbb{R}^n$ $|\xi|=\left(\sum_{i=1}^{n}\xi_i^2\right)^{\frac{1}{2}}$ is the Euclidean norm of $\xi$. The symbols  $\|\cdot\|_p$ and $\|\cdot\|_0$  denote the norm in  $L^p([0, 1], \mathbb{R}^n)$ and in $C([0,1], \mathbb{R}^n)$, respectively. 

 Throughout this section, we assume that $\Phi$ is the feasible set of $P$ and let  
 \begin{align}
 	&X= C([0,1], \mathbb{R}^n),\ U=L^\infty([0,1], \mathbb{R}^m),\label{XU-spaces}\\
 	&K=\{v\in L^\infty([0,1], \mathbb{R}^r)| v(t)\leq 0\ {\rm a.a.}\ t\in [0, 1]\}.\label{K-set}
 \end{align}

Let $\psi$ stands for $L, \varphi$ and $g$, where $L=(L_1,..., L_k)$, $g=(g_1,..., g_r)$ and $\varphi=(\varphi_1,..., \varphi_n)$. Let $\Omega$ be an open set in $\mathbb{R}^n\times\mathbb{R}^m$ such that $(\bar x(t), \bar u(t))\in \Omega$ for all $t\in [0,1]$.	Given $(\bar x, \bar u)\in\Phi,$ the symbols  $L[t], \varphi[t], g[t], L_x[t], \varphi_u[t], g_u[t]$
and so on,  stand for
\begin{align*}
	&L(t, \bar x(t), \bar u(t)),\  \varphi(t, \bar x(t),\ \bar u(t)),
	g(t, \bar x(t), \bar u(t)),\\
	&L_x(t, \bar x(t), \bar u(t)),\  \varphi_u(t, \bar x(t),\ \bar
	u(t)), g_u(t, \bar x(t), \bar u(t)), \ldots
\end{align*} 
$$
	L(t, \bar x(t), \bar u(t)),\;  \varphi(t, \bar x(t), \bar u(t)),\;
	g(t, \bar x(t), \bar u(t)),\;
	L_x(t, \bar x(t), \bar u(t)), \varphi_u(t, \bar x(t), \bar
	u(t)), g_u(t, \bar x(t), \bar u(t)), \ldots
$$
 We shall denote by  $\ell'(\bar x(1))$ and $h'(\bar x(1))$ the first-order  derivatives of $\ell$ and $h$ at $\bar x(1)$, respectively. Also, by   $\ell''(\bar x(1))$ and $h''(\bar x(1))$ the second-order  derivatives of $\ell$ and $h$ at $\bar x(1)$, respectively. 
We impose the  following  assumptions.
\begin{enumerate}
	\item [$(H1)$] The  function $\psi$ is a Carath\'{e}odory function on $[0,1]\times \Omega$	and  $\psi(t,
	\cdot, \cdot)$ is of class $C^2$ on $\Omega$ for a.e. $t\in[0,1]$.  Besides, for each $M>0$,
	there exist numbers $k_{\psi K}>0$  such that
	\begin{align*}
		&|\psi(t, x_1, u_1)-\psi(t, x_2, u_2)|+|\nabla_z \psi(t, x_1, u_1)-\nabla_z \psi(t,
		x_2, u_2)|+\\
		&+|\nabla_z^2 \psi(t, x_1, u_1)-\nabla_z^2 \psi(t, x_2, u_2)|\leq
		k_{\psi M}(|x_1-x_2| +|u_1-u_2|)
	\end{align*} for all $(x_i, u_i)\in \Omega$  and $t\in [0,1]$ satisfying $|x_i|, |u_i|\leq M$ with $i=1,2$ and $z=(x,u)\in Z$.\\
		Moreover, we require that the functions
	$
	\psi(t, 0,0), |\nabla_z \psi (t, 0, 0)|, \left|\nabla_z^2 \psi(t, 0, 0)\right|
	$ belong to $L^\infty([0,1], \mathbb{R})$.
	
	\item[$(H2)$] The mappings $h(\cdot)$ and $\ell(\cdot )$ are of class $C^2$ around $\bar x(1)$ and ${\rm det} h'(\bar x(1))\neq 0$. 	
	
		\item [$(H3)$] There exists a number $\gamma>0$  such that 
	\begin{align} 
		|{\rm det} R[t]|\geq \gamma \quad \text{for a.a.}\ t\in [0, 1],
	\end{align}	where matrix $R[t]$ is defined by setting
\begin{align}
	R[t]= g_u[t]g_u[t]^T.
\end{align}
\end{enumerate}

Among the above assumptions, $(H1)$ and $(H2)$ make sure that $\psi$  is of class $C^2$ around $(\bar x, \bar u)$ while $(H3)$ together with rank of $g_u[t]$ play an important role for normality of multipliers. To derive KKT optimality conditions for ${\rm (P)}$, we will consider two cases:  ${\rm rank}(g_u[\cdot])<m$ and   ${\rm rank}(g_u[t])=m$. 

\medskip

\begin{enumerate}
	\item [$(H4)$] for a.a. $t\in[0, 1]$, ${\rm rank}(g_u[t])<m$ and ${\rm dim} (T[t])= m_*$ with the basis $\{w_1,..., w_{m_*}\}\subset L^\infty([0,1], \mathbb{R}^m)$,  where the subspace $T[t]:=\{w\in \mathbb{R}^m| g_u[t]w=0\}$. 	
\end{enumerate} 

Let us define  matrices
\begin{align}
	&S[t]=[w_1, w_2,..., w_{m_*}],\label{S-matrix}\\
	&A[t]=\varphi_x[t]-\varphi_u[t]g_u[t]^T R[t]^{-1}g_x[t],\label{A-matrix}\\ 
	&B[t]=\varphi_u[t]S[t]\label{B-matrix}
\end{align}  and require that the linear equation $\dot x=A[t]x + B[t]u$ is controllable. Namely, we need the assumption
\begin{enumerate}	
	\item[$(H5)$] The mapping $H: L^\infty([0,1], \mathbb{R}^{m_*})\to \mathbb{R}^n$ is surjective, where 
	\begin{align}
		H(v):=\int_0^1\Omega(1, s)B[s]v(s)ds
	\end{align} with $\Omega(t,\tau)$ is the principal matrix solution to the equation $\dot x=A[t]x$ (see \cite[p.123]{Alekseev}). 
\end{enumerate}	
When ${\rm rank}(g_u[t])=m$, then $T[t]=\{0\}$ and $S=0$. Hence  $B[t]=0$ and the system $\dot x=A[t]x + B[t]u$ is not controllable. In this situation, instead of $(H4)$ and $(H5)$ we need the following assumption.
\begin{enumerate}	
	\item[$(H4)'$] There exists $(\hat x, \hat u)\in X\times U$ such that the following conditions are fulfilled:
	
	\noindent $(a)$ $\hat x=\int_0^{(\cdot)} (\varphi_x[s]\hat x+ \varphi_u[s]\hat u )ds$;
	
	\noindent $(b)$ $h_i(\bar x(1)) + h_i'(\bar x(1))\hat x(1)<0$ for  $i=1,2,.., n$;
	
	\noindent $(c)$ ${\rm esssup}(g_j[t] +g_{jx}[t]\hat x + g_{ju}[t]\hat u) <0$ for $j=1,2,.., r$. 
		
\end{enumerate} 
Next we define a critical cone. For this we put $I_*=\{i\in\{1,2,..., n\}: h_i(\bar x(1))=0\}$ and denote by $\mathcal{C}_*[\bar z]$ the set of vectors $(x,u)\in X\times U$ satisfying the following conditions:
\begin{enumerate}
	\item [($b_1$)] $\ell'(\bar x(1)) x(1)+ \int_0^1 \left(L_x[t]x(t) +L_u[t]u(t)\right)dt\in
	- \mathbb{R}^k_+$;
	\item [($b_2$)]
	$ x=\int_0^{(\cdot)}\big(\varphi_x[s]x+\varphi_u[s]u(s)\big)ds$;
	\item [($b_3$)] $h_i'(\bar x(1))x(1)\leq 0\ \text{for}\ i\in I_*$;
	\item[($b_4$)] $g_u[\cdot]x+ g_u[\cdot]u\in {\rm cone}(K-g[\cdot])$.		
\end{enumerate}  The closure of $\mathcal{C}_*[\bar z]$ in $X\times U$ is called a {\it critical cone} to the ${\rm (P)}$ at $\bar z$ and denoted by  $\mathcal{C}_\infty[\bar z]$. Each vector $(x, u)\in \mathcal{C}_\infty[\bar z]$ is called a critical direction. \\
The following theorem  gives first-and second-order necessary optimality conditions of KKT-type for locally weak Pareto solutions to ${\rm (P)}$.

\begin{theorem}\label{Theorem-SOC-MCP1} Suppose that $\bar z$ is a locally weak Pareto solution of  ${\rm (P)}$ under which assumptions $(H1)-(H5)$ or $(H1)-(H3)$ and $(H4)'$ are satisfied. Then for each vector $(\tilde x, \tilde u)\in \mathcal{C}_\infty[\bar z]$,  there exist  a vector	$\lambda\in \mathbb{R}^k_+$ with $|\lambda|=1$, a vector  $l\in \mathbb{R}^n$,  an absolutely	continuous function $p\colon [0, 1]\to \mathbb{R}^n$ and a vector	function $\theta=(\theta_1, \theta_2,..., \theta_r) \in L^\infty([0, 1], \mathbb{R}^r)$ such that the following conditions are fulfilled:
	
	\noindent $(i)$ (the adjoint equation)
	\begin{equation}\notag
		\begin{cases}
			\dot{p}(t)=-\varphi_x[t]^Tp(t)+L_x[t]^T\lambda+ g_x[t]^T\theta(t),\quad \text{a.e.}\quad t\in [0,1]\\
			p(1)=-(\lambda^T\ell'(\bar x(1)) +l^T h'(\bar x(1)));
		\end{cases}
	\end{equation}
	
	\noindent $(ii)$ (the stationary condition in $u$)
	$$
	L_u[t]^T\lambda - \varphi_u[t]^T p(t) +g_u[t]^T\theta(t)=0 \ \ \text{a.e.} \ \ t\in [0, 1];
	$$
	
	\noindent $(iii)$ (complementary conditions) 
	\begin{align}
			& l_i\geq 0,\  l_i h_i(\bar x(1))=0,\ i=1,2,..., n,\label{l-cond}\\
			&\theta_j(t)\geq 0,\ \theta_j(t) g_j[t]=0\  {\rm a.a.}\ t\in [0,1],\ j=1,2,..., r; \label{theta-cond}
	\end{align} 
	
	\noindent $(iv)$ (the nonnegative second-order condition) 
	\begin{align*}
	&\lambda^T\ell''(\bar x(1))\tilde x(1)^2 + \int_0^1\lambda^T[L_{xx}[t]\tilde x^2 +2L_{ux}[t]\tilde x\tilde u+L_{uu}[t]\tilde u^2] dt+l^T h''(\bar x(1))\tilde x(1)^2 \notag \\
	&-\int_0^1p^T[\varphi_{xx}[t]\tilde x^2 +2\varphi_{ux}[t]\tilde x\tilde u +\varphi_{uu}[t]\tilde u^2 ]dt +\int_0^1\theta^T[g_{xx}[t]\tilde x^2 +2g_{ux}[t]\tilde x\tilde u+ g_{uu}[t]\tilde u]dt\geq 0.
	\end{align*} 	
\end{theorem}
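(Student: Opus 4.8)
The plan is to realize problem ${\rm (P)}$ as a concrete instance of the abstract ${\rm (VOP)}$ of Section \ref{abstract-MP} and then invoke Theorem \ref{Theorem-SONC-VOP}. First I would set $Z=X\times U$ with $X,U$ as in \eqref{XU-spaces}, take $f=J$, let $F\colon Z\to X$ be the state-equation operator $F(x,u)(t)=x(t)-x_0-\int_0^t\varphi(s,x(s),u(s))\,ds$, put $H(x,u)=h(x(1))\in\mathbb{R}^n$ for the endpoint constraint, and let $G(x,u)=g(\cdot,x,u)$ with $K$ the cone \eqref{K-set} of a.e.-nonpositive functions, so that \eqref{P3}--\eqref{P4} become exactly $H(z)\le 0$ and $G(z)\in K$. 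The regularity hypotheses $(H1)$ and $(H2)$ then guarantee that $f,F,G$ are of class $C^2$ near $\bar z$, which is $(A2)$; and since $K$ contains every function bounded away from $0$ from below, ${\rm int}(K)\ne\emptyset$, giving $(A1)$.

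The first genuine task is to verify the Robinson constraint qualification, i.e. $(A4)$, equivalently $(A5)$, in both regimes. In the case $(H1)$--$(H5)$, where ${\rm rank}(g_u[t])<m$, I would use $(H3)$ to decouple the linearized mixed constraint: invertibility of $R[t]=g_u[t]g_u[t]^T$ lets one eliminate the range of $g_u[t]$ and reduce the linearized dynamics to the controllable system $\dot x=A[t]x+B[t]u$ with $A,B$ as in \eqref{A-matrix}--\eqref{B-matrix}. The surjectivity assumption $(H5)$ on the reachability map, together with the nondegeneracy $\det h'(\bar x(1))\ne 0$ from $(H2)$ (which makes the endpoint map a local diffeomorphism, so that reachability transfers to strict feasibility of the linearized endpoint constraint), then furnishes the Mangasarian--Fromovitz direction required by $(A5)$ and the surjectivity of $DF(\bar z)$. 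In the full-rank case $(H1)$--$(H3)$ and $(H4)'$, the pair $(\hat x,\hat u)$ supplied by $(H4)'$ is precisely a Slater point making $(A5)$ hold directly. Once $(A5)$ is established, Theorem \ref{Theorem-SONC-VOP}(b) produces a nonempty compact set of normal multipliers $(\lambda,e^*,l,w^*)$ with $|\lambda|=1$, the stationarity $D_z\mathcal{L}=0$, the sign and complementarity conditions \eqref{L2}--\eqref{L4}, and the second-order inequality \eqref{VOP-SOC2} along each critical direction; for a fixed $(\tilde x,\tilde u)\in\mathcal{C}_\infty[\bar z]$ I would select the maximizing normal multiplier, which attains a nonnegative value and will deliver $(iv)$.

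The decisive step is to convert the abstract multipliers into the concrete costate $p$ and the function $\theta$. The endpoint multiplier $l\in\mathbb{R}^n$ transfers verbatim, together with its complementarity \eqref{l-cond}. The multiplier $e^*$ attached to the state equation lives in $X^*$, the space of $\mathbb{R}^n$-valued measures; writing out $DF(\bar z)^*e^*$ and integrating by parts extracts a costate $p$ with transversality $p(1)=-(\lambda^T\ell'(\bar x(1))+l^T h'(\bar x(1)))$. The genuinely hard part is the multiplier $w^*$ of the mixed constraint: a priori $w^*\in(L^\infty)^*$ is only a finitely additive measure, not a function. Here I would exploit $(H3)$: the $D_u$-component of stationarity reads $L_u[t]^T\lambda-\varphi_u[t]^Tp(t)+g_u[t]^T\theta(t)=0$, and left-multiplying by $g_u[t]$ and inverting $R[t]$ forces $\theta(t)=-R[t]^{-1}g_u[t](L_u[t]^T\lambda-\varphi_u[t]^Tp(t))$, an honest element of $L^\infty([0,1],\mathbb{R}^r)$. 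This pointwise solvability is exactly the new technique for representing a functional on $L^\infty$ by an $L^1$-function, and it turns the abstract normal-cone condition $w^*\in N(K,G(\bar z))$ into the pointwise complementarity \eqref{theta-cond}; feeding the resulting $L^\infty$ function $\theta$ back into the adjoint relation shows that $p$ is in fact absolutely continuous and solves $(i)$.

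It then remains to collect the pieces: the stationary condition $(ii)$ is the $D_u$-component rewritten with $\theta$, condition $(i)$ is the $D_x$-component, $(iii)$ gathers the two complementarities, and the second-order condition $(iv)$ follows by expanding $D^2_z\mathcal{L}(\bar z,\lambda,e^*,l,w^*)[d,d]$ in \eqref{VOP-SOC2} via the chain rule for $f,F,G,H$ and substituting the representations of $e^*$ through $p$ and of $w^*$ through $\theta$, which reproduces the displayed quadratic form. I expect the main obstacle to be this $(L^\infty)^*$-representation of $w^*$ carried out simultaneously with the derivation of the adjoint equation from $e^*$: one must justify that the abstract stationarity, which holds only as an identity of functionals, may be evaluated pointwise in $t$, and that the resulting $\theta$ and $p$ inherit the required measurability, integrability, and absolute continuity. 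This is precisely where $(H1)$, $(H3)$ and the rank structure of $g_u[\cdot]$ are indispensable.
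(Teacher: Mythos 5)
Your proposal is correct and follows essentially the same route as the paper's proof: recast ${\rm (P)}$ as ${\rm (VOP)}$ with $F,H,G$ and $K$ exactly as you define them, verify the Robinson condition via the decomposition $u=S[t]v+g_u[t]^TR^{-1}[t]\hat u$ together with $(H5)$ and $\det h'(\bar x(1))\neq 0$ (resp.\ the Slater point of $(H4)'$), invoke Theorem \ref{Theorem-SONC-VOP}(b), represent $v^*$ by a function of bounded variation and integrate by parts to get the costate, and use the right inverse $g_u[t]^TR^{-1}[t]$ furnished by $(H3)$ to realize $w^*$ as an $L^\infty$ density $\theta$, with the same resulting formula $\theta(t)=-R[t]^{-1}g_u[t]\bigl(L_u[t]^T\lambda-\varphi_u[t]^T\bar p(t)\bigr)$. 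The only point you leave informal — passing from the functional identity to the pointwise one — is resolved in the paper exactly by the mechanism you hint at: testing the $u$-stationarity against controls $u=g_u[\cdot]^TR^{-1}[\cdot]w$, $w\in L^\infty$, so that $g_u u=w$ and the density representation of $w^*$ falls out before any pointwise statement is made.
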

\begin{proof} We first formulate ${\rm (P)}$ in the form of ${\rm (VOP)}$. Put 
	\begin{align*}
	Z=X\times U,\ E= C([0,1], \mathbb{R}^n),\  W= L^\infty([0,1], \mathbb{R}^r)
	\end{align*} and define  mappings $F: Z\to E$, $H:Z\to\mathbb{R}^n$, $H:Z\to\mathbb{R}^n$ and $G: Z\to W$   by setting
	\begin{align}
		& F(x, u):=x-x_0-\int_0^{(\cdot)}\varphi(s, x(s), u(s))ds\label{F-map}\\
		& H(x, u):= h(x(1))\label{H-map}\mbox{ and } G(x, u):=g(\cdot, x, u).
	\end{align}
	Then ${\rm (P)}$ can be written in the following form of ${\rm (VOP)}$:
	\begin{equation*} {\rm {\rm (P)}}\quad \quad \quad
		\begin{cases}
			J(x, u)\to\min,\\
			F(x, u)=0,\;H(x, u)\leq 0 \mbox{ and } G(x, u)\in K.			
		\end{cases}
	\end{equation*}  We shall  show that ${\rm (P)}$ satisfies assumptions $(A1)$, $(A2)$ and $(A4)$ or $(A1), (A2)$ and $(A5)$ of Theorem \ref{Theorem-SONC-VOP}. By \eqref{K-set},  we have ${\rm int}(K)\neq\emptyset$. Hence $(A1)$ is valid.  By $(H1)$ and $(H2)$, $J$, $F$,  $G$ and $H$ are of class $C^2$ around $(\bar x, \bar u)$ and $\bar x(1)$, respectively. Hence $(A2)$ is fulfilled. By a simple calculation,  we have 
	\begin{align}\label{F-Derivative}
	&DF(\bar x, \bar u)[(x,u)]=\Big(x-\int_0^{(\cdot)}(\varphi_x[s]x(s)+\varphi_u[s]u(s))ds\Big),\\
	&DH(\bar z)[(x, u)]= h'(\bar x(1)) x(1) \mbox{ and }  DG(\bar x, \bar u)[(x, u)]=g_x[t]x(t)+ g_{u}[t]u(t).
	\end{align}  The following lemma shows that ${\rm (P)}$ satisfies $(A4)$.
	
\begin{lemma}\label{LemmaControlable1} Under assumptions $(H4)$ and $(H5)$, $(\bar x, \bar u)$ satisfies condition $(A4)$.	
\end{lemma}
	
\noindent {\it Proof}.  To verify  condition \eqref{RobinsonCond3} we prove that for any $e\in E$, $\xi\in\mathbb{R}^n$ and $w\in W$,  there exist $(x, u)\in Z$, $\eta\in {\rm cone}((-\infty, 0]^n-H(z_0))$  and $\zeta\in{\rm cone}(K- g[\cdot])$ such that 

		\begin{equation}
			DF(\bar x, \bar u)[(x, u)]=e,\quad
			DH(z_0)[(x, u)] -\eta =\xi,\quad
		DG(\bar x, \bar y)[(x, u)]- \zeta=w.	
		\end{equation}  
		
		This equation is equivalent to the system:
		\begin{align}
			& x-\int_0^{(\cdot)}(\varphi_x[s]x(s)+\varphi_u[s]u(s))ds=e,\label{Lin1}\\
			& h'(\bar x(1)) x(1)- \eta= \xi \mbox{ and } g_x[t]x + g_u[t]u-\zeta =w.	\label{Lin3}
		\end{align} Note that $g_x[t]$ is a matrix with size $r\times n$, $g_u[t]$ is a matrix with size $r\times m$ and $S[t]$ is a matrix with size $m\times m_*$, which is given by \eqref{S-matrix}.   We take $\eta=0, \zeta=0$ and find $u$ in the form $u=S[t]v+ g_u[t]^T R^{-1}[t] \hat u$, where $\hat u\in L^\infty([0,1], \mathbb{R}^r)$ and $v\in L^\infty([0,1], \mathbb{R}^{m_*})$. Then the system \eqref{Lin1}-\eqref{Lin3} becomes

		\begin{align}
			&x=\int_0^{(\cdot)}\big(\varphi_x[s]-\varphi_u[s]g_u[s]^T R^{-1}[s]g_x[s])x(s) +\varphi_u[s]S[s] v(s)\big)ds
			 + \int_0^{(\cdot)}\varphi_u[s]g_u[s]^T R^{-1}[s] w(s)ds+ e\label{Onto1} \\
			& x(1)= h'(x(1))^{-1} \xi \mbox{ and }  \hat u =w-g_x[\cdot]x.	\label{Onto3}
		\end{align} 
		Here we used the fact that $g_u[t]S[t]v=0$.  The above system  can be written in the form
		\begin{align}
			& x=\int_0^{(\cdot)}\big(A[s]x(s)+ B[s]v(s)\big)ds\label{Onto-x1} +\int_0^{(\cdot)}\varphi_u[s]g_u[s]^T R^{-1}[s] w(s)ds+ e,\\
			& x(1)= h'(\bar x(1))^{-1}\xi \mbox{ and }  \hat u =w-g_x[\cdot]x, \label{Onto-x3}
		\end{align} where  $A$ and $B$ are defined by \eqref{A-matrix} and \eqref{B-matrix}, respectively. By setting $y=x-e$, the system becomes
		\begin{align}
			& y=\int_0^{(\cdot)}\big(A[s]y(s)+ B[s]v(s)\big)ds +\int_0^{(\cdot)}\big(A[s]e(s)+\varphi_u[s]g_u[s]^T R^{-1}[s] w(s)\big)ds,\label{Onto-y1}\\
			& y(1)= h'(\bar x(1))^{-1}\xi-e(1) \mbox{ and } \hat u =w-g_x[\cdot]x.	\label{Onto3}
		\end{align} Let $\Omega(t, \tau)$ be the principal matrix solution of the system	$\dot y =A y.$  Then for each $v\in L^\infty([0,1], \mathbb{R}^{m_*})$, the solution of equations \eqref{Onto-y1} is given by
		\begin{align}
			y(t)=\int_0^t\big(\Omega(t, s)B[s]v(s)+ \Omega(t,s)(A[s]e(s)+\varphi_u[s]g_u[s]^T R^{-1}[s] w(s))\big)ds.
		\end{align} By $(H4)$, the mapping 
		$$
		H(v)=\int_0^1\Omega(1, s)B[s]v(s)ds
		$$ is surjective. Hence, there exists $\hat v\in L^\infty([0,1], \mathbb{R}^{m_*})$ such that 

		$$
			h'(\bar x(1))^{-1}\eta-e(1)=
			\int_0^1\Omega(1, s)B[s]\hat v(s)ds +\int_0^1\Omega(1,s)(A[s]e(s)+\varphi_u[s]g_u[s]^T R^{-1}[s] w(s))ds. 
		$$
		Let us define 
		$$
		\hat y(t)=\int_0^t\Omega(t, s)B[s]\hat v(s)ds +\int_0^t\Omega(t,s)(A[s]e(s)+\varphi_u[s]g_u[s]^T R^{-1}[s] w(s))ds. 
		$$ Then $\hat y$ is a solution of \eqref{Onto-y1} and \eqref{Onto3} corresponding to $\hat v$.  Let $\tilde x= \hat y+ e$. Then $\tilde x$ is a solution of \eqref{Onto-x1}-\eqref{Onto-x3}  corresponding to $\hat v$. Set $\hat u =w-g_x[\cdot]\tilde x$ and $\tilde u=S[t]\hat v+ g_u[t]^T M^{-1}[t] \hat u$. Then $(\tilde x, \tilde u)$ and $(\eta, \zeta)=(0,0)$ satisfies \eqref{Lin1}-\eqref{Lin3}. The lemma is proved. 
		
\begin{lemma} Under $(H4)'$, $(\bar x, \bar u)$ satisfies $(A5)$.	
\end{lemma}

\noindent {\it Proof.}  It is easy to show that $DF(\bar x, \bar u)[X\times U]=E$. By $(a)$ of $(H4)'$, we have $DF(\bar x, \bar u)[(\hat x, \hat u)]=0$. By $(b)$ of $(H4)'$, we have $H_i(\bar x, \bar u) + DH_i(\bar x, \bar u)[(\hat x, \hat u)]<0$. By \eqref{K-set}, we have 
$$
{\rm int}(K)=\{(v_1,..., v_r)\in L^\infty([0,1], \mathbb{R}): {\rm esssup} v_j<0, j=1,2,.., r.\}
$$ Therefore, from $(c)$ in $(H4)'$, we have 
$$
G(\bar x, \bar u) + DG(\bar x, \bar u)[(\hat x, \hat u)]\in {\rm int}(K).
$$ Hence $(A5)$ is valid. The lemma is proved. 
	
\medskip 
	
	We now derive first and second-order optimality conditions of KKT-type  for ${\rm (P)}$. 	Let 
	$$
	\mathcal{L}(z, \lambda, l, v^*, w^*)=\lambda^T J(x, u)  + v^* F(z)+ l^TH(z)+ w^*G(z)
	$$ be the Lagrangian associated with the ${\rm (P)}$, where $z=(x, u)\in Z$.   According to $(b)$ of Theorem \ref{Theorem-SONC-VOP}, 
	for each critical direction $\tilde z=(\tilde x, \tilde u)\in
	\mathcal{C}_\infty[\bar z]$, there exist multipliers $\lambda\in
	\mathbb{R}^k_+$ with $|\lambda|=1$, $l\in\mathbb{R}^n$,  $v^*\in C([0, 1], \mathbb{R}^n)^*$ and $ w^*\in L^\infty([0,1], \mathbb{R}^r)^*$
	such that the following conditions are valid:
	\begin{align}
		&\lambda^T J_x(\bar x, \bar u)+  v^* F_x(\bar z)+ l^T H_x(\bar z) +w^* g_x[\cdot]=0,\label{NFC1}\\
		&\lambda^T J_u(\bar x, \bar u) + v^* F_u(\bar z) +  w^* g_u[\cdot]=0,\label{NFC2}\\
		& l_i\geq 0,\  l_i H_i(\bar z)=0, i=1,2,.., n, \label{NFC2+}\\
		& w^* \in N(K, G(\bar z)), \label{NFC3}\\
		& \lambda^T \nabla^2 J(\bar z)[\tilde z, \tilde z] + v^*
		D^2 F(\bar z)[\tilde z, \tilde z]+ l^T D^2H(\bar z)[\tilde z, \tilde z] +  w^* D^2 G(\bar z)[\tilde z, \tilde z] \geq 0.\label{NFC4}
	\end{align} Here $v^*$ is a signed Radon measure and $w^*$ is a
	signed and finite additive measure on $[0, 1]$ which is absolutely
	continuous w.r.t the Lebesgue measure $|\cdot|$ on $[0, 1]$. By
	Riesz's Representation (see \cite[Chapter 01, p. 19]{Ioffe} and
	\cite[Theorem 3.8, p. 73]{Hirsch}), there exists a vector
	function of bounded variation $\nu$, which is continuous from the
	right and vanish at zero such that
	$$
	\langle v^*, y\rangle =\int_0^1 y(t)^Td\nu(t)\ \ \forall y\in E,
	$$ where $\int_0^1 y(t)d\nu(t)$ is the Riemann-Stieltjes integral.
	
	Define $\bar p \colon [0, 1]\to \mathbb{R}^n$ by setting
	$$
	\bar p(t) =\nu((t, 1])=\int_t^1 d\nu(s)=\nu(1)-\nu(t).
	$$ Clearly, $\bar p(1)=0$ and the function $\bar p$ is of bounded variation. 
	
	By \cite[Theorem 2.21]{Wheeden},  for any  $a\in C([0,1], \mathbb{R}^n)$, we have 
	\begin{align}\label{StiejetInbyPart}
		\int_0^1 a(s)^Td\nu(s)=a(1)^T\nu(1)-a(0)^T\nu(0)-\int_0^1 \nu(s)da(s)^T.
	\end{align} Applying  formula \eqref{StiejetInbyPart} for $a(t)=\int_0^t \varphi_x[s]x(s)ds$, we get 
	\begin{align}
		\langle v^* F_x(\bar z), x\rangle
		&=\left\langle v^*,	x-\int_0^{(\cdot)}\varphi_x[s]x(s)ds\right\rangle\notag\\
		&=\int_0^1 x^T(t)d\nu(t)-\int_0^1\int_0^t (\varphi_x[s] x(s))^Tds d\nu(t)\notag\\
		&=\int_0^1 x^T(t)d\nu(t) -[\int_0^1 (\varphi_x[s] x(s))^T\nu(1)ds- \int_0^1(\varphi_x[s]x(s))^T\nu(s)ds] \notag\\
		&=\int_0^1 x^T(t)d\nu(t) - \int_0^1
		x^T(t)\varphi_x[t]^T \bar p(t)dt. \notag\\
		&=\int_0^1 x^T(t)d\nu(t) - \int_0^1
		\bar p^T(t)\varphi_x[t]x(t)dt.\label{Action1}
	\end{align} 
	Similarly, for any $u\in L^\infty([0, 1], \mathbb{R}^m)$, we get
	\begin{equation}\label{Action2}
		\left\langle v^* F_u(\bar z), u\right\rangle=-\int_0^1\int_0^t(\varphi_u[s]u(s))^Tds
		d\nu(t)=-\int_0^1 u(s)^T \varphi_u[s]^T \bar p(s) ds
	\end{equation}  and 
	\begin{equation}\label{Action3}
		\left\langle v^*\nabla^2F(\bar z)z, z\right\rangle=-\int_0^1 \left(\bar
		p(t)^T\nabla^2\varphi[s]z(s), z(s)\right)ds.
	\end{equation}From \eqref{NFC2} and \eqref{Action2}, we have
	\begin{equation}\label{NFC5}
		\int_0^1\lambda^T L_u[s]u(s)ds -\int_0^1 \bar p(s)^T \varphi_u[s]
		u(s) ds +\langle w^*,g_u[\cdot]u\rangle=0\ \ \forall u\in U.
	\end{equation}
	Let us claim that $w^*$ can be represented by a density
	in $L^\infty([0, 1], \mathbb{R}^r)$. Indeed,  by the formula of inverse matrix (see \cite[Theorem 8.1, p. 176]{Lang}), we have 
	$$
	R^{-1}[t]=\big(\frac{e_{ij}(t)}{{\det R[t]}}\big),
	$$ where $e_{ij}(\alpha)$ are cofactors. By $(H3)$, $|\det R[t]|\geq\gamma_0$ for a.a $t\in[0,1]$. Hence   $R^{-1}[\cdot]\in L^\infty([0, 1], \mathbb{R}^{r\times r})$. Taking any $w\in L^\infty([0,1], \mathbb{R}^r)$, we put $u(t)=g_u[t]^T R^{-1}[t] w(t)$. Inserting $u$ into \eqref{NFC5}, we have 
	\begin{align}\label{Dens1}
		\int_0^1\lambda^T L_u[t]g_u[t]^T R^{-1}[t] w(t) dt - \int_0^1  \bar p^T(t) \varphi_u[t] g_u[t]^T R^{-1}[t] w(t) dt +\langle w^*, w\rangle=0.
	\end{align} This implies that 
	\begin{align*}
		\langle w^*, w\rangle= \int_0^1 (-\lambda^T L_u[t]g_u[t]^T R^{-1}[t] +\bar p^T(t)\varphi_u[t]g_u[t]^T R^{-1}[t])w(t)dt. 
	\end{align*} Let us define a function  
	$$
	\theta(t)= -\lambda^T L_u[t]g_u[t]^T R^{-1}[t] +\bar p^T(t)\varphi_u[t]g_u[t]^T R^{-1}[t].
	$$ Then $\theta \in L^\infty([0, 1], \mathbb{R})$ and 
	\begin{align}\label{RepDensity}
	\langle w^*, w\rangle=\int_0^1 \theta(t)w(t) dt\quad \forall w\in L^\infty([0,1], \mathbb{R}^r).  
	\end{align} The claim is justified.  Combining this with \eqref{NFC5}, we get 
	\begin{align*}
		\int_0^1\lambda^TL_u[t] u(t)dt-\int_0^1 \bar p(t)^T\varphi_u[t]u(t)dt + \int_0^1\theta(t)^Tg_u[t]u(t) dt =0\quad \forall u\in L^\infty([0,1], \mathbb{R}^m).
	\end{align*} It follows that 
\begin{align}
\lambda^TL_u[t]- \bar p(t)^T\varphi_u[t] +\theta(t)^T g_u[t]	=0 \quad \text{a.a.}\quad t\in [0,1]. 
\end{align} We obtain assertion $(ii)$ of the theorem. 
 Let us define 
	\begin{align} 
		p(t)=
		\begin{cases}
			\bar p(t)\quad &\text{if}\quad t\in [0, 1)\\
			\displaystyle\lim_{t\to 1^-}\bar p(t) \quad &\text{if}\quad t=1.
		\end{cases}
	\end{align} Note that although $\nu$ is right continuous, it may not be continuous at $1$ and so is $\bar p$. 
	
	We now have from \eqref{NFC1} and  \eqref{Action1} that 
	\begin{align*}
		&\int_0^1 x^T(s)d\nu(s) - \int_0^1\bar p^T(s)\varphi_x[s]x(s)ds +\int_0^1\lambda^T L_x[s]x(s)ds +\int_0^1 \theta(s)^T g_x[s]ds\\
		& +\lambda^T\ell_{x_1}(\bar x(1))x(1) + l^Th_{x_1}(\bar x(1)) x(1) =0\quad  \forall x\in X.
	\end{align*} This is equivalent to 
	\begin{align*}
		&\int_0^1 x^T(s)d\nu(s) = \int_0^1\bar p^T(s)\varphi_x[s]x(s)ds -\int_0^1\lambda^T L_x[s]x(s)dt -\int_0^1 \theta(s)^T g_x[s] x(s)ds\\
		& -\lambda^T\ell_{x_1}(\bar x(1))x(1) - l^Th_{x_1}(\bar x(1)) x(1)\quad    \forall x\in X.
	\end{align*} By definition of $p$, we have 
	\begin{align}\label{AdjEq3}
		&\int_0^1 x^T(s)d\nu(s) = \int_0^1 p^T(s)\varphi_x[s]x(s)ds -\int_0^1\lambda^T L_x[s]x(s)ds -\int_0^1 \theta(s)^T g_x[s] x(s)ds\notag\\
		& -\lambda^T\ell_{x_1}(\bar x(1))x(1) - l^Th_{x_1}(\bar x(1)) x(1)\quad    \forall x\in X.
	\end{align} By Lemma 5.1 in \cite{Kien3}, the above equality is also valid for $x_t(s)=\xi\chi_{(t, 1]}(s)$ with $0 \leq t <1$, where $\xi\in\mathbb{R}^n$ and $\chi_{(t, 1]}$ is the indicator function of the set $(t, 1]\subset [0,1]$.  Inserting $x_t(s)$ into \eqref{AdjEq3}, we get 
	\begin{align*}
		\int_t^1 \xi^T d\nu(s) = \int_t^1 \xi^T(p^T(s)\varphi_x[s] -\lambda^T L_x[s]- \theta(s)^T g_x[s])ds -\xi^T\big(\lambda^T\ell_{x_1}(\bar x(1)) - l^Th_{x_1}(\bar x(1))\big)	
	\end{align*} for all $t\in [0, 1)$.  Since $\xi$ is arbitrary, we obtain
	\begin{align*}
		\int_t^1 d\nu(s) = \int_t^1 (p^T(s)\varphi_x[s] -\lambda^T L_x[s]- \theta(s)^T g_x[s])ds -(\lambda^T\ell_{x_1}(\bar x(1)) + l^Th_{x_1}(\bar x(1)).
	\end{align*} This means 
	\begin{align}\label{AdjEq4}
		\bar p(t) = \int_t^1 (p^T(s)\varphi_x[s] -\lambda^T L_x[s]- \theta(s)^T g_x[s])ds -(\lambda^T\ell_{x_1}(\bar x(1)) + l^Th_{x_1}(\bar x(1))
	\end{align} for all $t\in [0, 1)$. It follows that 
	\begin{align}
		\dot p(t) =  -p^T(t)\varphi_x[t] +\lambda^T L_x[t]+\theta(t)^T g_x[t]\quad \text{a.a.}\quad t\in (0, 1). 
	\end{align} Taking the limit both sides of \eqref{AdjEq4} when $t\to 1^{\text -}$, we obtain 
	$$
	p(1)=\lim_{t\to 1^{\text{-}}}\bar p(t)=-(\lambda^T\ell_{x_1}(\bar x(1)) + l^Th_{x_1}(\bar x(1)).
	$$ Hence assertion $(i)$ is followed. Since $H_i(\bar z)=h_i(\bar x(1))$, condition \eqref{NFC2+} implies that  $l_i h_i(\bar x(1))=0$. 	 From condition \eqref{NFC3}, we have 
	$$
	\int_0^1 \theta^T(w(t)-g[t])dt=\langle w^*, w-G(\bar z)\rangle \leq 0\quad \forall w\in K. 
	$$ This and \cite[Corolary 4.4]{Pales2} imply that $\theta(t)\in N((-\infty, 0]^r, g[t])$ for a.e. $t\in [0,1]$. Hence $\theta_j(t)g_j[t]=0$  and $\theta_j(t)\geq 0$ for a.a. $t\in [0, 1]$ and for all $j=1,2,..., r$.  We obtain \eqref{theta-cond} in  assertion $(iii)$.
	
	 Finally, by combining \eqref{NFC4}, \eqref{Action3} and \eqref{RepDensity}, we derive assertion $(iv)$.   The proof of the theorem is complete. 	
\end{proof}

\medskip

\noindent To derive  sufficient conditions, we  need to enlarge the critical cone. For this we define $\mathcal{C}_2[\bar z]$  which consists of couples $(x, u)\in X\times L^2([0,1], \mathbb{R}^m)$ such that 
\begin{enumerate}
	\item [($b'_1$)] $\ell'(\bar x(1)) x(1)+ \int_0^1 \left(L_x[t]x(t) +L_u[t]u(t)\right)dt\in
	- \mathbb{R}^k_+$;
	\item [($b'_2$)]
	$ x=\int_0^{(\cdot)}\big(\varphi_x[s]x+\varphi_u[s]u(s)\big)ds$;
	\item [($b'_3$)] $h_i'(\bar x(1))x(1)\leq 0\quad \text{for}\quad i\in I_*$; 	
	\item[($b'_4$)] $g_u[\cdot]x+ g_u[\cdot]u\in \overline{\rm cone}(K- g[\cdot])$.		
\end{enumerate} Clearly, $\mathcal{C}_\infty[\bar z]\subseteq \mathcal{C}_2[\bar z]$.

\begin{definition}
	Let $\bar{z}=(\bar{x},\bar{u}) \in \Phi$ be a feasible point of ${\rm (P)}$. We say that $\bar{z}$ is a locally strong Pareto solution of ${\rm (P)}$ if there exist a number $\epsilon>0$ and a vector $c\in {\rm int} \mathbb{R}^k_+$ such that for all $( x, u)\in (B_X(\bar x,\epsilon)\times B_U(\bar u, \epsilon))\cap \Phi$, one has
	$$
	J(x,u)-J(\bar x,\bar u) - c\|u-\bar u\|_2^2\notin -{\rm int}(\mathbb{R}^k_+).
	$$
\end{definition}
It is easy to show that every locally strong Pareto solution of ${\rm (P)}$ is also a locally Pareto solution of this problem.  The following theorem provides sufficient conditions for locally strong Pareto solutions to ${\rm (P)}$. 

\begin{theorem}\label{Theorem2-SOSC}
	Suppose  $(\bar x, \bar u)\in \Phi$, assumptions $(H1)-(H3)$,    multipliers  $\lambda\in \mathbb{R}^k_+$ with $|\lambda|=1$, $l\in\mathbb{R}^n$, the absolutely continuous function $p:[0,1]\to \mathbb{R}^n$ and a function $\theta \in L^\infty([0,1],\mathbb{R}^r)$  such that  conditions $(i)-(iii)$  of Theorem 3.1 are fulfilled.  Furthermore, assume that 
	
	\noindent $(iv)'$ (the strictly second-order condition)
	\begin{align*}
	&\lambda^T\ell''(\bar x(1))\tilde x(1)^2 + \int_0^1\lambda^T[L_{xx}[t]\tilde x^2 +2L_{ux}[t]\tilde x\tilde u+L_{uu}[t]\tilde u^2] dt+l^T h''(\bar x(1))\tilde x(1)^2 \notag \\
	&-\int_0^1p^T[\varphi_{xx}[t]\tilde x^2 +2\varphi_{ux}[t]\tilde x\tilde u +\varphi_{uu}[t]\tilde u^2 ]dt +\int_0^1\theta^T[g_{xx}[t]\tilde x^2 +2g_{ux}[t]\tilde x\tilde u+ g_{uu}[t]\tilde u]dt> 0
\end{align*} for all $(\tilde x, \tilde u)\in \mathcal{C}_2[\bar z]\setminus\{0\}$; 
	
	\noindent $(v)'$ there exists a number $\gamma_0>0$ such that for a.a. $t\in [0,1]$ one has
	$$
	\lambda^T L_{uu}[t](v, v)\geq \gamma_0|v|^2\quad \forall v\in \mathbb{R}^m.  
	$$		
	Then $(\bar x,\bar u)$ is a locally strong Pareto solution of $( P)$.
\end{theorem}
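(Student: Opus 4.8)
The plan is to argue by contradiction, combining the first-order stationarity of the Lagrangian with the strict second-order condition $(iv)'$ and the Legendre–Clebsch type coercivity $(v)'$ through a normalized-sequence (weak-compactness) argument. Suppose $(\bar x,\bar u)$ is not a locally strong Pareto solution. Fixing a candidate vector $c=\eta(1,\dots,1)$ with $\eta>0$ to be specified at the very end, for every $n$ there is a feasible $z_n=(x_n,u_n)\in\Phi$ with $\|x_n-\bar x\|_0+\|u_n-\bar u\|_\infty<1/n$ and $J_i(z_n)-J_i(\bar z)<\eta\,\|u_n-\bar u\|_2^2$ for all $i=1,\dots,k$. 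Writing $\delta x_n=x_n-\bar x$, $\delta u_n=u_n-\bar u$, $s_n=\|\delta u_n\|_2>0$, and $v_n=\delta u_n/s_n$, $y_n=\delta x_n/s_n$, the state equation $F(z_n)=0$ together with Gronwall's inequality gives $\|\delta x_n\|_0\le C s_n$, so $\{y_n\}$ is bounded in $X$ while $\|v_n\|_2=1$. After passing to a subsequence, $v_n\rightharpoonup v$ weakly in $L^2([0,1],\mathbb{R}^m)$ and, by compactness of the linearized solution operator, $y_n\to y$ strongly in $X=C([0,1],\mathbb{R}^n)$.

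First I would show that $(y,v)\in\mathcal{C}_2[\bar z]$. Dividing the feasibility relations and the penalty inequalities by $s_n$ and passing to the limit yields $(b_1')$–$(b_4')$: the linearized state equation $(b_2')$ comes from the first-order Taylor expansion of $F(z_n)=0$, using $\varphi_u[\cdot]\in L^\infty$ to pass to the limit in the weakly convergent control term; $(b_1')$ from $\big(J_i(z_n)-J_i(\bar z)\big)/s_n<\eta s_n\to0$; $(b_3')$ from $h_i(x_n(1))\le0=h_i(\bar x(1))$ for $i\in I_*$; and the linearized mixed-constraint inclusion $(b_4')$ from $g(\cdot,x_n,u_n)\in K$, where the cone membership survives only in the closure because $v_n$ converges merely weakly — which is precisely why $\mathcal{C}_2[\bar z]$ is defined with $\overline{\mathrm{cone}}$.

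Next comes the core estimate. Using $D_z\mathcal{L}(\bar z,\lambda,l,v^*,w^*)=0$ (from $(i)$–$(ii)$), a second-order Taylor expansion gives $\mathcal{L}(z_n)-\mathcal{L}(\bar z)=\tfrac12 Q(\delta z_n)+o(\|\delta z_n\|^2)$, where $Q$ is the quadratic form appearing in $(iv)'$. Since $z_n$ is feasible and $l\ge0$, $\theta\ge0$ with the complementary slackness in $(iii)$, the constraint terms of $\mathcal{L}(z_n)$ are nonpositive while those of $\mathcal{L}(\bar z)$ vanish, so $\lambda^T\big(J(z_n)-J(\bar z)\big)\ge\tfrac12 Q(\delta z_n)+o(\|\delta z_n\|^2)$; multiplying the penalty inequalities by $\lambda_i\ge0$ and summing gives $\lambda^T\big(J(z_n)-J(\bar z)\big)\le\eta|\lambda|_1 s_n^2$. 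Dividing by $s_n^2$,
\[
\tfrac12 Q(y_n,v_n)+\frac{o(\|\delta z_n\|^2)}{s_n^2}\le\eta|\lambda|_1 .
\]
The key technical point here is the \emph{two-norm discrepancy}: the remainder is in fact $o(s_n^2)$ and not merely $o(\|\delta z_n\|_Z^2)$. The Lipschitz bound on the second derivatives in $(H1)$ controls it by integrals such as $\int_0^1|\delta u_n|^3\,dt\le\|\delta u_n\|_\infty\|\delta u_n\|_2^2$ and $\int_0^1|\delta x_n|^2|\delta u_n|\,dt$, each $o(s_n^2)$ because $\|\delta u_n\|_\infty\to0$ and $\|\delta x_n\|_0=O(s_n)$.

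Finally I would pass to the limit and invoke $(v)'$. Splitting $Q(y_n,v_n)$ into the pure-control term $\int_0^1\lambda^T L_{uu}[t]v_n^2\,dt$ plus a remainder, the mixed and pure-state terms converge to their values at $(y,v)$ by the strong convergence of $y_n$ and the weak convergence of $v_n$, while $(v)'$ and the splitting $v_n=v+(v_n-v)$ yield $\liminf_n\int_0^1\lambda^T L_{uu}v_n^2\,dt\ge\int_0^1\lambda^T L_{uu}v^2\,dt+\gamma_0\liminf_n\|v_n-v\|_2^2$. Combined with $Q(y,v)\ge0$ from $(iv)'$, this gives the weak lower semicontinuity estimate $\liminf_n\tfrac12 Q(y_n,v_n)\ge\tfrac12 Q(y,v)+\tfrac{\gamma_0}{2}\lim_n\|v_n-v\|_2^2$. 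Using the coercivity of $Q$ on the critical cone — there exists $\delta>0$ with $Q(x,u)\ge\delta\|u\|_2^2$ for all $(x,u)\in\mathcal{C}_2[\bar z]$, itself obtained from $(iv)'$ and $(v)'$ by the same weak-compactness scheme applied to a minimizing sequence — one gets $\tfrac12 Q(y,v)\ge\tfrac{\delta}{2}\|v\|_2^2$, and since $\|v\|_2^2+\lim_n\|v_n-v\|_2^2=1$ this forces $\liminf_n\tfrac12 Q(y_n,v_n)\ge\tfrac12\min(\delta,\gamma_0)$. Choosing $\eta$ at the outset with $\eta|\lambda|_1<\tfrac12\min(\delta,\gamma_0)$ then contradicts the displayed core estimate, establishing that $(\bar x,\bar u)$ is a locally strong Pareto solution. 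The principal obstacle is exactly this limit passage: because $v_n\rightharpoonup v$ only weakly, the control-quadratic part of $Q$ is not weakly continuous, and the whole argument hinges on the Legendre–Clebsch condition $(v)'$ rendering that part weakly lower semicontinuous and coercive (extended to the full $uu$-block of $\mathcal{L}$ should the $\varphi_{uu}$ and $g_{uu}$ contributions not be otherwise absorbed); the two-norm discrepancy in the remainder, dispatched via $(H1)$, is the second delicate point.
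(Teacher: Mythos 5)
Your proposal is correct and rests on the same core machinery as the paper's proof: contradiction, normalization by $t_j=\|u_j-\bar u\|_2$, extraction of $\hat u_j\rightharpoonup\hat u$ weakly in $L^2$ together with $\hat x_j\to\hat x$ strongly in $C([0,1],\mathbb{R}^n)$ via Gronwall and the compact embedding $W^{1,2}\hookrightarrow C$, membership of the limit in $\mathcal{C}_2[\bar z]$, a second-order expansion of the Lagrangian combined with stationarity and complementary slackness, and the use of $(v)'$ to handle the weakly convergent control terms. Where you genuinely differ is the endgame. The paper lets the penalty vectors $c_j\to 0$, so that $\lambda^T\big(J(z_j)-J(\bar z)\big)\leq o(t_j^2)$; the limit of the quadratic form is then $\leq 0$, so $(iv)'$ forces the limiting critical direction to be $(0,0)$, and a second pass using $\|\hat u_j\|_2=1$ and $(v)'$ produces $0\geq\gamma_0$, an immediate contradiction. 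You instead fix $c=\eta(1,\dots,1)$, which obliges you to prove an extra key lemma the paper never needs, namely the coercivity $Q(x,u)\geq\delta\|u\|_2^2$ on $\mathcal{C}_2[\bar z]$ (correctly observed to follow from $(iv)'$, $(v)'$ and the same weak-compactness scheme), and then to combine it with the identity $\|v\|_2^2+\lim_n\|v_n-v\|_2^2=1$ and the final choice $\eta|\lambda|_1<\tfrac12\min(\delta,\gamma_0)$. Both routes are valid; note only that $\eta$ must logically be fixed before the sequence $z_n$ is constructed, which is harmless because $\delta$ and $\gamma_0$ depend on the data and multipliers but not on the sequence. The paper's vanishing-$c_j$ device is the more economical bookkeeping, since it collapses your coercivity lemma and the choice of $\eta$ into the single observation that the weak limit must vanish; your version, in exchange, makes the quantitative content (how large the penalty $c$ may be taken) explicit. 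Finally, the delicate point you flag at the end is real and is shared by both arguments: the terms $-\int_0^1 p^T\varphi_{uu}v_n^2\,dt$ and $\int_0^1\theta^Tg_{uu}v_n^2\,dt$ are quadratic in the merely weakly convergent controls, and $(v)'$ yields lower semicontinuity only for the $\lambda^TL_{uu}$ block; the paper passes to the limit in these terms without further justification, so your explicit acknowledgment of this issue is, if anything, more careful than the published proof.
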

\begin{proof}  Suppose the conclusion of  the theorem was
	false. Then, we could find sequences $\{(x_j, u_j)\}\subset  \Phi$ and
	$\{c_j\}\subset \mathrm{int}\,\mathbb{R}^k_+$ such that $(x_j, u_j) \to (\bar x,\bar u)$ as $j\to\infty$,  $c_j\to 0$ and
	\begin{equation}\label{Inq1}
		J(x_j, u_j)- J(\bar x, \bar u) -c_j\|u_j-\bar u\|_2^2\in
		-{\rm int}\mathbb{R}^k_+.
	\end{equation} It follows that $(x_j, u_j)\neq (\bar x, \bar u)$ for all $j\geq 1$. By a simple argument, we can show that if $u_j= \bar u$ then $x_j=\bar x$,  which is absurd. Hence we must have $u_j\neq \bar u$ for all $j\geq 1$. Let 
$$
\mathcal{L}(z, \lambda,  p, l, \theta):=\lambda^T J(z) +  p^TF(z)+  l^T H(z) +\theta^T G(z)
$$ be the Lagrange function associated with ${\rm (P)}$, where
\begin{align*}
	&\lambda^T J(z)=\lambda^T\ell(x(1))+ \int_0^1\lambda^T L(s, x(s), u(s))ds,\\
	& p^T F(z)=-\int_0^1 \dot p^T(s)x(s)ds -\int_0^1 p^T(s) 
	\varphi(s, x(s), u(s))ds + p^T(1) x(1),\\
	& l^T H(z)= l^T h(x(1)),\\
	&\theta^T G(z)=\int_0^1 \theta^T(s) g(s, x(s), u(s))ds.
\end{align*}
Then, from conditions $(i)$ $(ii)$ and $(iii)$ of Theorem 3.1, we can show that
\begin{equation}\label{FCond}
	\nabla_z \mathcal{L}(\bar z, \lambda,  p, l, \theta)=0.
\end{equation}
Define $t_j=\|u_j- \bar u\|_2$, $\hat x_j= \frac{x_j-\bar x}{t_j}$ and   $\hat u_j= \frac{u_j-\bar u}{t_j}$. Then $t_j\to 0^+$ and $\|\hat u_j \|_2=1$. By reflexivity of  $L^2([0, 1],
\mathbb{R}^m)$,   we may assume that $\hat u_j
\rightharpoonup \hat u$. Since $\lambda\in\mathbb{R}^k_+$, we have from \eqref{Inq1} that
\begin{equation}\label{KeyInq2}
	\lambda^T J(z_j)-\lambda^T J(\bar z)\leq t_j^2\lambda^T c_j\leq
	t_j^2 |\lambda||c_j|\leq o(t_j^2).
\end{equation} For clarification, we divide the remain of the  proof into some steps.

\medskip

\noindent {\it Step 1}. Showing hat $\hat x_j$ converges uniformly to some
$\hat x$ in $C([0,1], \mathbb{R}^n)$.

 Indeed, since $(\bar x, \bar u), (x_j,
u_j)\in\Phi$, we get 

$$x_j(t)= x_0+\int_0^t \varphi(s, x_j(s), u_j(s))ds,\quad \bar x(t)= x_0+\int_0^t \varphi(s, \bar x(s), \bar u(s))ds.
$$
Hence 
\begin{equation}\label{DifE1}
	t_j\hat x_j(t)=\int_0^t (\varphi(s, x_j(s), u_j(s))-\varphi(s, \bar
	x(s), \bar u(s))ds.
\end{equation} Since $x_j\to \bar x$ uniformly and $u_j\to \bar u$ in
$L^\infty([0, 1], \mathbb{R}^m)$, there exists a constant
$\varrho>0$ such that $\|x_j\|_0\leq\varrho,
\|u_j\|_\infty\leq\varrho$. By assumption $(H1)$, there exists
$k_{\varphi, \varrho}>0$ such that
$$
|\varphi(s, x_j(s), u_j(s))-\varphi(s, \bar x(s), \bar u(s))|\leq
k_{\varphi, \varrho}(|x_j(s)-\bar x(s)| +|u_j(s)-\bar u(s)|)
$$ for a.e. $s\in[0, 1]$. Hence we have from \eqref{DifE1} that

\begin{equation}\label{DEInq1}
	|\hat x_j(t)|\leq \int_0^t k_{\varphi, \varrho}(|\hat x_j(s)|+ |\hat
	u_j(s)|)ds
\mbox{ and }
	|\dot{\hat x}_j(t)|\leq  k_{\varphi, \varrho}(|\hat x_j(t)|+ |\hat
	u_j(t)|).
\end{equation} 

Since $\|\hat u_j\|_2=1$, we get
\begin{align*}
	|\hat x_j(t)|&\leq \int_0^t k_{\varphi, \varrho}|\hat x_j(s)|ds+
	\int_0^1 k_{\varphi, \varrho}|\hat u_j(s)|ds\\
	&\leq \int_0^tk_{\varphi, \varrho}|\hat x_j(s)|ds+ k_{\varphi,
		\varrho}\left(\int_0^1 |\hat u_j(s)|^2 ds\right)^{1/2}\\
	&\leq \int_0^t k_{\varphi, \varrho}|\hat x_j(s)|ds+ k_{\varphi,
		\varrho}.
\end{align*} By the Gronwall Inequality (see \cite[18.1.i, p. 503]{Cesari}), we have
\begin{equation*}
	|\hat x_j(t)|\leq k_{\varphi, \varrho}\exp(k_{\varphi, \varrho}).
\end{equation*}  Combining  this with 
\eqref{DEInq1} yields
\begin{equation*}
	|\dot{\hat x}_j(t)|^2\leq  2k^2_{\varphi, \varrho}\left(|\hat x_j(t)|^2+
	|\hat u_j(t)|^2\right)\leq  2k^2_{\varphi, \varrho}(k^2_{\varphi,
		\varrho}\exp(2k_{\varphi, \varrho}) + |\hat u_j|^2).
\end{equation*}
Hence
$$
\int_0^1 |\dot{\hat x}_j(t)|^2 dt\leq 2k^2_{\varphi,
	\varrho}(k^2_{\varphi, \varrho}\exp(2k_{\varphi, \varrho})+1).
$$ Consequently, $\{\hat x_j\}$ is bounded in $W^{1,2}([0, 1],
\mathbb{R}^n)$. By passing subsequence, we may assume that $\hat
x_j\rightharpoonup \hat x$ weakly in $W^{1,2}([0, 1],
\mathbb{R}^n)$. As  the embedding
$W^{1,2}([0, 1], \mathbb{R}^n)\hookrightarrow C([0, 1],
\mathbb{R}^n)$ is compact (see \cite[Theorem 8.8]{Brezis1}) we have $\hat x_j\to\hat x$ in norm of $C([0, 1], \mathbb{R}^n)$. 

\medskip

\noindent {\it Step 2}. Showing that $(\hat x, \hat u)\in\mathcal{C}_2[\bar z].$

Using the mean value theorem for the left hand side of \eqref{Inq1} and dividing both sides by $t_j$, we get 
\begin{align*}
&\ell'(\bar x(1)+ \gamma_j(x_j(1)-\bar x(1)))\hat x_j(1)+ \int_0^1 L_x(\bar x +\xi_j(x_j-\bar x), u_j)\hat x_j dt\\
& +\int_0^1 L_u(\bar x, \bar u + \xi_j(u_j-\bar u))\hat u_j dt
	+\frac{o(t^2_j)}{t_j}\in -{\rm int}(\mathbb{R}_+^k),
\end{align*} where $0\leq \gamma_j\leq 1$ and $0\leq \xi_j(t)\leq 1$. Letting $j\to\infty$ and using the dominated convergence theorem, we obtain
\begin{align}\label{CriticalCon1}
	\ell'(\bar x(1))\hat x(1) +\int_0^1 (L_x[t]\hat x(t) +L_u[t]\hat u(t))dt \in -\mathbb{R}^k_{+}. 
\end{align} Hence $(\hat x, \hat u)$ satisfies condition $(b_1')$. Also, from \eqref{DifE1} we have 
\begin{align*}
	\hat x_j(t)=\int_0^t\big(\varphi_x(\bar x +\xi_j(x_j-\bar x), u_j)\hat x_j +
	\varphi_u(\bar x, \bar u + \eta_j(u_j-\bar u))\hat u_j\big)dt,
\end{align*} where $0\leq \xi_j(s), \eta_j(s)\leq 1.$ Letting $j\to\infty$ and using the dominated convergence theorem, we obtain
$$
\hat x (t)=\int_0^t( \varphi_x[s]\hat x(s)+ \varphi_u[s]\hat u(s))ds.
$$ Hence $(\hat x, \hat u)$ satisfies condition $(b_2')$.  Obviously, we have
$$
h(x_j(1))-h(\bar x(1))\in \big((-\infty, 0]^n- h(\bar x(1))\big). 
$$ By a Taylor's expansion, we get 
\begin{align*}
	h'(\bar x(1)+ \tau(x_j(1)-\bar x(1)))\hat x_j(1)\in \frac{1}{t_j} \big((-\infty, 0]^n- h(\bar x(1))\big)\subset T((-\infty, 0]^n, h(\bar x(1))).
\end{align*} By letting $j\to\infty$, we obtain
$$
h'(\bar x(1))\hat x(1)\in T((-\infty, 0]^n, h(\bar x(1))).
$$ Hence  $(\hat x, \hat u)$ satisfies condition $(b_3')$. It remains to verify condition $(b_4')$. 

Since $G(x_j, u_j)-G(\bar x, \bar u)\in K-G(\bar x, \bar u)$, we have
\begin{align}\label{c5-CriCone}
\frac{1}{t_j}(G(x_j, u_j)-G(\bar x, \bar u))\in {\rm cone}(K-G(\bar z))\subset \overline{{\rm cone}}(K-G(\bar z)).
\end{align}  Note that
$$
\overline{{\rm cone}}(K-G(\bar z))=T(K; G(\bar z))=\left\{ v\in L^2([0, 1], \mathbb{R}^r) \;|\; v(t)\in T((-\infty, 0]^r, g[t])\ {\rm a.e.}\right\}.
$$ Using the mean value theorem, we have from \eqref{c5-CriCone} that 
\begin{align}\label{c5-CriCone2}
g_x(t, \bar x + \eta_j(x_j-\bar x), \bar u)\hat x_j + g_u(t, x_j,  \bar u + \zeta_j(u_j-\bar u))\hat u_j \in T(K; G(\bar z)),	
\end{align}  where $0\leq \eta_j(t), \zeta_j(t)\leq 1$ a.a. $t\in [0,1]$.
Since  $T(K; G(\bar z))$ is a closed
convex set in $L^2([0, 1], \mathbb{R}^r)$, it is also a weakly closed
set in $L^2([0, 1], \mathbb{R}^r)$. By $(H1)$, we can show that 
$$
g_x(\cdot, \bar x + \eta_j(x_j-\bar x), \bar u)\hat x_j\to g_x[\cdot]\hat x
$$ strongly in $L^2([0, 1], \mathbb{R}^r)$ and 
$$
g_u(t, x_j,  \bar u + \zeta_j(u_j-\bar u))\hat u_j\to g_u[\cdot]\hat u
$$ weakly in $L^2([0, 1], \mathbb{R}^r)$. Letting $j\to\infty$, we obtain from that \eqref{c5-CriCone2} that 
$$
g_x[\cdot]\hat x + g_u[\cdot]\hat u\in T(K, g[\cdot]).
$$ Hence $(\hat x, \hat u)$ satisfies condition $(b_4')$ and so $(\hat x, \hat u)\in\mathcal{C}_2[\bar z]$. 

\medskip

\noindent {\it Step 3}.  Showing that $(\hat x, \hat u)=(0,0)$.

By \eqref{l-cond}, we have $l^T(h(x_j(1))-h(\bar x(1)))\leq 0$. By \eqref{theta-cond},   $\int_0^1 \theta^T(t)(g(t, x_j, u_j)-g[t])dt\leq 0$. Besides, we have  
\begin{align*}
&\int_0^1\big(-\dot p^T(x_j-\bar x) -p(\varphi(t, x_j, u_j)-\varphi(t, \bar x, \bar u))\big)dt+ p^T(1)(x_j(1)-\bar x(1))\\
&=\int_0^1 p\big((\dot{x}_j -\dot{\bar x})-(\varphi(t, x_j, u_j)-\varphi(t, \bar x, \bar u))\big) dt=0.
\end{align*}
From the above, \eqref{KeyInq2} and definition of $\mathcal{L}$, we have 

\begin{align*}
&\mathcal{L}(x_j, u_j \lambda,  p, l,  \theta)-\mathcal{L}(\bar x, \bar u, \lambda,  p, l,  \theta)	
= \lambda^T(\ell(x_j(1))-\ell(\bar x(1))) +\lambda^T\int_0^1(L(t, x_j, u_j)-L(t, \bar x, \bar u))dt \\
&+l^T(h(x_j(1))-h(\bar x(1)))-\int_0^1\big(\dot p^T(x_j-\bar x) +p(\varphi(t, x_j, u_j)-\varphi(t, \bar x, \bar u))\big)dt\\
&+ \int_0^1 \theta^T(t)(g(t, x_j, u_j)-g(t, \bar x, \bar u))dt\\
\leq&\lambda^T(\ell(x_j(1))-\ell(\bar x(1))) +\lambda^T\int_0^1(L(t, x_j, u_j)-L(t, \bar x, \bar u))dt\leq o(t_j^2).
\end{align*}
\allowdisplaybreaks
Using a second-order Taylor expansion for $\mathcal{L}$ and \eqref{FCond}, we get
\begin{align*}
o(t_j^2)&\geq \mathcal{L}(z_k, \lambda ,\bar p, l, \theta)-\mathcal{L}(\bar
	z,\lambda,\bar p, l, \theta)\\
     =& \frac{t_j^2}{2}[\lambda^T\ell''(\bar x(1)+\xi_{1j}(x_j(1)-\bar x(1)))\hat x_j(1)^2 + l^T h''(\bar x(1)+\xi_{2j}(x_j(1)-\bar x(1)))\hat x_j(1)^2]\\
     & +\frac{t_j^2}{2}\lambda^T\int_0^1[L_{xx}(t, \bar x+\eta_{1j}(x_j-\bar x), \bar u)\hat x_j^2 +2L_{ux}(t,\bar x+\eta_{2j}(x_j-\bar x), \bar u)\hat x_j\hat u_j]dt\\
     & +\frac{t_j^2}{2}\lambda^T\int_0^1L_{uu}(t, x_j, \bar u+\eta_{3j}(u_j-\bar u))\hat u_j^2 ]dt\\
     &-\frac{t_j^2}{2}\int_0^1p^T[\varphi_{xx}(t, \bar x+\eta_{4j}(x_j-\bar x), \bar u)\hat x_j^2 +2\varphi_{ux}(t,\bar x+\eta_{5j}(x_j-\bar x), \bar u)\hat x_j\hat u_j]dt\\
     & -\frac{t_j^2}{2}\int_0^1 p^T\varphi_{uu}(t, x_j, \bar u+\eta_{6j}(u_j-\bar u))\hat u_j^2 ]dt\\
    & +\frac{t_j^2}{2}\int_0^1\theta^T[g_{xx}(t, \bar x+\eta_{7j}(x_j-\bar x), \bar u)\hat x_j^2 +2g_{ux}(t,\bar x+\eta_{8j}(x_j-\bar x), \bar u)\hat x_j\hat u_j]dt\\
     & +\frac{t_j^2}{2}\int_0^1 \theta^T g_{uu}(t, x_j, \bar u+\eta_{9j}(u_j-\bar u))\hat u_j^2 ]dt,
\end{align*} where $0\leq \xi_{ij}, \eta_{ij}(t)\leq 1$. This implies that 
\begin{align}\label{KeyInq3}
	\frac{o(t_j^2)}{t_j^2}\geq&
	 \lambda^T\ell''(\bar x(1)+\xi_{1j}(x_j(1)-\bar x(1)))\hat x_j(1)^2 + l^T h''(\bar x(1)+\xi_{2j}(x_j(1)-\bar x(1)))\hat x_j(1)^2\notag\\
	& +\lambda^T\int_0^1[L_{xx}(t, \bar x+\eta_{1j}(x_j-\bar x), \bar u)\hat x_j^2 +2L_{ux}(t,\bar x+\eta_{2j}(x_j-\bar x), \bar u)\hat x_j\hat u_j]dt\notag\\
	& +\lambda^T\int_0^1L_{uu}(t, x_j, \bar u+\eta_{3j}(u_j-\bar u))\hat u_j^2 dt \notag \\
	&-\int_0^1p^T[\varphi_{xx}(t, \bar x+\eta_{4j}(x_j-\bar x), \bar u)\hat x_j^2 +2\varphi_{ux}(t,\bar x+\eta_{5j}(x_j-\bar x), \bar u)\hat x_j\hat u_j]dt\notag \\
	& -\int_0^1 p^T\varphi_{uu}(t, x_j, \bar u+\eta_{6j}(u_j-\bar u))\hat u_j^2 ]dt\notag\\
	& +\int_0^1\theta^T[g_{xx}(t, \bar x+\eta_{7j}(x_j-\bar x), \bar u)\hat x_j^2 +2g_{ux}(t,\bar x+\eta_{8j}(x_j-\bar x), \bar u)\hat x_j\hat u_j]dt\notag\\
	& +\int_0^1 \theta^T g_{uu}(t, x_j, \bar u+\eta_{9j}(u_j-\bar u))\hat u_j^2 ]dt.
\end{align} Since $\|u_j-\bar u\|_\infty\to 0$, $\|x_j-\bar x\|_0\to 0$ and $(H1)$, we have 
$$
\|\lambda^TL_{uu}(\cdot, x_j, \bar u+\eta_{3j}(u_j-\bar u))\hat u_j^2-\lambda^TL[\cdot]\|_\infty \to 0\quad {\rm as}\quad j\to\infty.
$$ By $(v)'$, the functional 
$$
u\mapsto \int_0^1 \lambda^T L_{uu}[t]u^2 dt
$$ is convex and sequentially lower semicontinuous. It follows that 

\begin{align}
	\lim_{j\to\infty} \lambda^T\int_0^1L_{uu}(t, x_j, \bar u+\eta_{3j}(u_j-\bar u))\hat u_j^2 dt=&\lim_{j\to\infty}\int_0^1(\lambda^TL_{uu}(t, x_j, \bar u+\eta_{3j}(u_j-\bar u))-\lambda^TL_{uu}[t])\hat u_j^2 dt\notag\\
	&+\lim_{j\to\infty}\int_0^1\lambda^T L_{uu}[t]\hat u_j^2 dt.\notag\\
	=&\lim_{j\to\infty}\int_0^1\lambda^T L_{uu}[t]\hat u_j^2 dt\geq \max(\gamma_0, \int_0^1\lambda^TL_{uu}[t]\hat u^2 dt). \label{KeyInq5}
\end{align} 
Using this fact and taking limits both sides of \eqref{KeyInq3}, we obtain
\begin{align*}
	&0\geq
	\lambda^T\ell''(\bar x(1))\hat x(1)^2 + l^T h''(\bar x(1))\hat x(1)^2+\int_0^1\lambda^T[L_{xx}[t]\hat x^2 +2L_{ux}[t]\hat x\hat u+L_{uu}[t]\hat u^2] dt \notag \\
	&-\int_0^1p^T[\varphi_{xx}[t]\hat x^2 +2\varphi_{ux}[t]\hat x\hat u +\varphi_{uu}[t]\hat u^2 ]dt +\int_0^1\theta^T[g_{xx}[t]\hat x^2 +2g_{ux}[t]\hat x\hat u+ g_{uu}[t]\hat u]dt.
\end{align*}Combining this with $(iv)'$, yields $(\hat x, \hat u)=(0, 0)$. 

Finally, using \eqref{KeyInq5} and taking limits both sides of \eqref{KeyInq3} again, we get 
\begin{align*}
&0\geq
\lambda^T\ell''(\bar x(1))\hat x(1)^2 + l^T h''(\bar x(1))\hat x(1)^2+\int_0^1\lambda^T[L_{xx}[t]\hat x^2 +2L_{ux}[t]\hat x\hat u] dt +\gamma_0 \notag \\
&-\int_0^1p^T[\varphi_{xx}[t]\hat x^2 +2\varphi_{ux}[t]\hat x\hat u +\varphi_{uu}[t]\hat u^2 ]dt +\int_0^1\theta^T[g_{xx}[t]\hat x^2 +2g_{ux}[t]\hat x\hat u+ g_{uu}[t]\hat u]dt.
\end{align*} Since $(\hat x, \hat u)=(0, 0)$, we get $0\geq \gamma_0$ which is absurd. This finishes the proof. 	
\end{proof}

We will now present an illustrative example that satisfies conditions (H1)-(H5) or (H1)-(H3), (H4)' of Theorem \ref{Theorem-SOC-MCP1}.

\begin{example}{\rm  Consider the problem of finding $x\in C([0,1], \mathbb{R}^2)$ and $u\in L^\infty([0, 1], \mathbb{R}^3)$ which solve
\begin{align}
	\begin{cases}
	&J(x, u)=(J_1(x, u), J_2(x, u),..., J_k(x, u))\to\min,\\
	&\text{s.t.}\notag\\
	& x_1(t)=x_{10}+\int_0^t( sx_1 + u_1 + u_3)ds\\
	&x_2(t)=x_{20}+\int_0^t(-\frac{1}3 x_1+ sx_2 + u_2)ds\\
	& h_1(x(1))= x_1(1) - x^2_2(1) -1\leq 0,\\
	& h_2(x(1))= x_2(1) -1\leq  0,\\
	& g(x, u)= x_1 -x_2 + u_1 + u_2- u_3 -1\leq 0, 
	\end{cases}
\end{align} where $L_i$ and $\ell_i$  are assumed to  satisfy $(H1)$ and $(H2)$, respectively. In this problem we have  $n=2, m=3$ and $r=1$. 

Let $(\bar x, \bar u)$ be a feasible point of the problem. We show  that the problem  satisfies conditions $(H1)-(H5)$. Assumption $(H1)$ is straightforward. For $(H2)$, we have 
\begin{align*}
	h'(\bar x(1))=
	\begin{pmatrix}
		1 & -2\bar x_2(1) \\
		0 &   1\\
	\end{pmatrix}	
\end{align*} and det$h'(\bar x(1))=1$. We have $g_u[t]=[1,1, -1]$ and so $R[t]=g_u[t]g_u[t]^T= 3$. Hence ${\rm det}R[t]=3$ and $R[t]^{-1}=1/3$.   Thus the problem satisfies $(H1)-(H3)$. For $(H4)$, we have 
$$
T[t]=\{(w_1, w_2, w_3)\in\mathbb{R}^3| w_1 + w_2-w_3=0\}=\{(w_1, w_2, w_1+w_2)\}={\rm span}([1, 0,  1]^T, [0,1,1]^T)
$$ Hence  ${\rm dim} T[t]=2$ and so  $(H4)$ is valid. It remains to check $(H5)$. From the above, we have 
\begin{align*}
	S[t]=\begin{pmatrix}
		1 & 0 \\
		0 &   1\\
		1 &  1
	\end{pmatrix},\ 	
	\varphi_u[t])=
	\begin{pmatrix}
		1 & 0 &1\\
		0 & 1& 0\\
	\end{pmatrix}\quad \text{and}\quad 
B[t]=\varphi_u[t]S[t]=
	\begin{pmatrix}
		2 & 1 \\
		1 &   1\\
	\end{pmatrix}
\end{align*}   By \eqref{A-matrix}, we have 
\begin{align*}
	A[t]=\varphi_x[t]-\varphi_u[t]g_u[t]^T R[t]^{-1}g_x[t]&=
		\begin{pmatrix}
		t & 0 \\
		-1/3&   t\\
	\end{pmatrix}-
\begin{pmatrix}
	1 & 0 &1\\
	0 &   1 &0\\
\end{pmatrix}[1,1, -1]^T\frac{1}3[1, -1]\\
&=\begin{pmatrix}
	t & 0 \\
	0 &   t-\frac{1}3\\
\end{pmatrix}.
\end{align*} Let $\Omega(t, \tau)$ be the principal matrix solution to the system $\dot x= Ax$, By definition, each column of $\Omega$ is a solution of $\dot x=Ax$ and for $t=\tau$ these columns become $e_1=[1, 0]$ and $e_2=[0,1]^T$. Hence we have 
\begin{align*}
	\Omega(t, \tau)=
	\begin{pmatrix}
		\exp(\frac{t^2}2 -\frac{\tau^2}2) & 0 \\
		0 &   \exp(\frac{t^2}2 -\frac{t}3-\frac{\tau^2}2+\frac{\tau}3)\\
	\end{pmatrix}
\end{align*} and the mapping $H: L^\infty([0,1], \mathbb{R})\to \mathbb{R}^2$ is given by 
\begin{align*}
	H(v)=\int_0^1\Omega(1,\tau)B[\tau]v(\tau) d\tau=[\int_0^1 \alpha(\tau)(2v_1 +v_2)d\tau, \int_0^1 \beta(\tau)(v_1+ v_2)d\tau]^T,
\end{align*} where $v=[v_1, v_2]^T$ and $\alpha(\tau):=\exp(\frac{1}2 -\frac{\tau^2}2)$ and $\beta(\tau):=\exp(\frac{1}6 -\frac{\tau^2}2+\frac{\tau}3)$. Let $\alpha_0=\int_0^1\alpha(\tau)d\tau$ and $\beta_0=\int_0^1 \beta(\tau)d\tau$. Then $\alpha_0>0$ and $\beta_0>0$. For any $\xi=[\xi_1, \xi_2]^T\in\mathbb{R}^2$, we consider the equation
\begin{align*}
	\begin{cases}
&\alpha_0(2v_1 +v_2)=\xi_1\\
&\beta_0 (v_1+v_2)=\xi_2.	
    \end{cases}
\end{align*} Its solution is given by 
\begin{align}
v_1=\frac{\xi_1}{\alpha_0}-\frac{\xi_2}{\beta_0},\ v_2=\frac{2\xi_2}{\beta_0}-\frac{\xi_1}{\alpha_0}
\end{align} which also satisfies the equation $Hv=\xi$. Hence $H$ is onto. Consequently $(H5)$ is valid and Theorem \ref{Theorem-SOC-MCP1} is applicable for the problem.
}
\end{example}

\section{Application to Sustainable Energy Management in Smart Grids}

The transition to sustainable energy systems is a critical challenge of our time, requiring a delicate balance between multiple, often conflicting objectives. Smart grids, with their integration of various energy sources and advanced control capabilities, offer a promising platform for addressing these challenges \cite{fang2012}. However, the complexity of these systems necessitates sophisticated optimization techniques \cite{tuballa2016}. In this section, we present and analyze a multiobjective optimal control problem for sustainable energy management in smart grids. Our model incorporates four key objectives: cost minimization, maximization of renewable energy usage, minimization of environmental impact, and maintenance of grid stability. By applying our theoretical results on KKT optimality conditions for multiobjective optimal control problems, we aim to provide insights into the structure of Pareto optimal solutions for this complex system. This application not only demonstrates the practical relevance of our theoretical framework but also contributes to the development of effective strategies for managing the intricate trade-offs in sustainable energy systems.

We now consider the following multiobjective optimal control problem:

\begin{align}
    &\textrm{Min}_{\,\mathbb{R}^4_+}\ J(x, u)\\ 
    &\textrm{s.t.}\\
    & x(t)=x_0+\int_0^t \varphi(s, x(s),u(s))ds\quad  \text{for all}\quad t\in [0, 1],\\
    &h(x(1))\leq 0,\\
    &g(t, x(t), u(t))\leq 0\quad \text{for a.a.}\quad t\in [0,1].
\end{align}

Here, $x(t) = \big(x_1(t), x_2(t), x_3(t)\big) \in \mathbb{R}^3$ represents the state variables: energy stored in batteries (MWh), grid load (MW), and cumulative CO$_2$ emissions (tons), respectively. The control variables $u(t) =\big(u_1(t), u_2(t), u_3(t)\big) \in \mathbb{R}^3$ represent solar, wind, and conventional energy inputs (MW). The specific components are defined as follows:

1. Objective functions:
   \begin{align*}
   J_1(x, u) &= \int_0^1 [c_1u^2_1(t) + c_2u^2_2(t) + c_3u^2_3(t)] dt,\     J_2(x, u) = -\int_0^1 [u_1(t) + u_2(t)] dt,\\ 
   J_3(x, u) &= x_3(1),\    J_4(x, u)= \int_0^1 [x_2(t) - x_2^{\text{target}}]^2 dt
   \end{align*}

   where $c_1$, $c_2$, and $c_3$ are the unit costs of solar, wind, and conventional energy, respectively, and $x_2^{\text{target}}$ is the target grid load.

2. System dynamics:
   \begin{align*}
   \varphi(t, x, u) = \begin{bmatrix}
   \eta(u_1 + u_2 + u_3 - x_2) \\
   \varphi_2(t, x_2, u_1, u_2, u_3) \\
   \alpha_3 u_3,
   \end{bmatrix}
   \end{align*}
   where $\eta>0$ is the energy storage efficiency, $\varphi_2$ is a function describing the grid load dynamics, and $\alpha_3$ is the emission rate of conventional energy. Here 
   \begin{align}
   \varphi_2(t, x_2, u_1, u_2, u_3)= f(t, x_2) + b_1(t)u_1 + b_2(t)u_2 + b_3(t) u_3,
   \end{align} where $f$ is a continuous function on $[0,1]\times\mathbb{R}$ and $f(t, \cdot)$ is of class $C^2$ for each $t\in [0,1]$, and  $b_i(\cdot)\in L^\infty([0,1], \mathbb{R})$ with $i=1,2,3$.
   
3. Terminal constraints:
   \begin{align*}
   h(x(1)) &= x_1(1) - x_1^{\max} \leq 0,
   \end{align*}
   where $x_1^{\max}$ is the maximum battery capacity.

4. Path constraints:
   \begin{align*}
   g_1(t, x, u) &= u_1 - u_1^{\max} \leq 0 \\
   g_2(t, x, u) &= u_2 - u_2^{\max} \leq 0 \\
   g_3(t, x, u) &= tu_1 + tu_2 + u_3 - c(t) \leq 0
   \end{align*}

   where $u_1^{\max}$ and $u_2^{\max}$ are the maximum production capacities for energy sources 1 and 2 respectively, and $c(t)$ is a time-varying function representing the upper bound of the combined energy input.

The objective functions, system dynamics, and constraints are defined as in the problem formulation. We now verify that this problem satisfies the necessary assumptions for our main theorem.

\textbf{Verification of the Assumptions:}

(H1) Smoothness and Lipschitz Continuity: The functions $L$, $\varphi$, and $g$ are Carathéodory functions and $C^2$ in $x$ and $u$. The cost functions, system dynamics, and constraints are well-behaved, satisfying the Lipschitz condition and boundedness of derivatives.

(H2) Smoothness of Terminal Conditions: The function $h(x(1))$ includes battery capacity constraints, which are linear and thus $C^2$. The terminal cost $\ell(x(1))$ is linear in $x_3(1)$ and thus $C^2$. Moreover, $\det h'(\bar{x}(1)) \neq 0$ is satisfied as $h$ is a linear function of $x(1)$.

(H3) Determinant Condition: Define $g_u[t] = \begin{bmatrix} 
    1 & 0 & 0 \\ 
    0 & 1 & 0 \\ 
    t & t & 1
    \end{bmatrix}$. \\
    Then, $R[t] = g_u[t]g_u[t]^T = \begin{bmatrix} 
    1+t^2 & t^2 & t \\ 
    t^2 & 1+t^2 & t \\ 
    t & t & 1+t^2
    \end{bmatrix}$. We have $\det (R[t] )= 1 + t^2+2t^4 \geq 1$ for all real $t$. Thus, (H3) is satisfied with $\gamma = 1$.

(H4) Rank Condition: With the given $g_u[t]$, we have $\rank(g_u[t]) = 3 = m$, so (H4) is not applicable in this case.

(H4)' Alternative Condition: Since (H3) is satisfied and (H4) is not applicable, we verify (H4)'.  To check $(H4)'$ we need to show that, there exist $(\hat x_1, \hat x_2, \hat x_3)$ and $(\hat u_1, \hat u_2, \hat u_3)$ satisfying the following conditions:
\begin{align}
	&\hat x_1(t) =\int_0^t\eta(\hat u_1 + \hat u_2 +\hat u_3-\hat x_2)ds\label{LinEq1}\\
	&\hat x_2(t) =\int_0^t (f_{x_2}[s]\hat x_2 + b_1\hat u_1+ b_2\hat u_2 +b_3\hat u_3)ds\label{LinEq2}\\
	&\hat x_3(t)= \int_0^t\alpha_3 \hat u_3 ds,\label{LinEq3}
\end{align}
\begin{align}\label{TerminalIneq} 
	&h(\bar x_1(1))+ h'(\bar x_1(1)) x(1)= \bar x_1(1)-x_1^{\rm max} +\hat x_1(1)<0
	\end{align} and 
\begin{align}
	& \bar u_1 -u^{\rm max} + \hat u_1 <-\epsilon_3, \label{MixConstraint1}\\
	&\bar u_2-u_2^{\rm max} + \hat u_2<-\epsilon_2, \label{MixConstraint2}\\
	&g_3[t] + t\hat u_1 + t\hat u_2 +\hat u_3 <-\epsilon_3 \label{MixConstraint3}
\end{align} for some $\epsilon_i>0$ with $i=1,2,3$. 

Since $f_{x_2}[\cdot]$ is continuous on $[0, 1]$, there exists a number $a>0$ such that $|f_{x_2}[t]|\leq a$ for all $t\in [0, 1]$. This is equivalent to  
$ -a\leq -f_{x_2}[t]\leq a$ for all $t\in [0, 1]$. 

Let us take $\hat u_i=-\gamma_i e^{\frac{t}{\gamma_i}}$  with $\gamma_i>0$ for $i=1,2,3$ and assume that $(\hat x_1, \hat x_2, \hat x_3)$ is a solution of \eqref{LinEq1}-\eqref{LinEq3} corresponding to $(\hat u_1, \hat u_2, \hat u_3)$. 
	
Put  $\hat u=\sum_{i=1}^3 b_i \hat u_i$ and  $\mu(t):=\exp(-\int_0^t f_{2x_2}[s]ds)$. Then we have 
\begin{align}\label{MuIneq}
e^{-a} \leq e^{-at}\leq \mu(t)\leq e^{at}\leq e^a\  \forall t\in [0,1].	
\end{align}
 From equation \eqref{LinEq2},  we have 
$$
\dot {\hat x}_2 -f_{x_2}[t] \hat x_2 =\hat u.
$$ Multiplying both sides by $\mu(t)$, we get 
$$
\mu(t)\dot {\hat x}_2(t)-f_{x_2}[t] \hat x_2(t)\mu(t)= \hat u(t)\mu(t).
$$ This implies that 
$$
\frac{d}{dt}(\mu(t) \hat x_2(t))=\hat u(t)\mu(t). 
$$ Integrating on $[0, t]$ with $t\in [0, 1]$, we obtain 
$$
\hat x_2(t)=\frac{\int_0^t \hat u(s)\mu(s)ds}{\mu(t)}.
$$ 
Inserting $\hat x_2$ into \eqref{LinEq1} and using \eqref{MuIneq},  we have 
\begin{align*}
\hat x_1(1)&=-\eta \sum_{i=1}^3
\gamma_i^2(e^{1/\gamma_i}-1)-\eta\int_0^1\frac{\int_0^t \mu(s)\hat u(s)ds}{\mu(t)} dt \\
&=-\eta \sum_{i=1}^3
\gamma_i^2(e^{1/\gamma_i}-1)+ \eta\int_0^1\frac{\int_0^t \sum_{i=1}^3 b_i\gamma_i e^{s/\gamma_i}\mu(s)ds}{\mu(t)} dt \\
&\leq -\eta \sum_{i=1}^3
\gamma_i^2(e^{1/\gamma_i}-1)+ \eta\int_0^1\frac{\int_0^t \sum_{i=1}^3 \|b_i\|_\infty\gamma_i e^{s/\gamma_i} e^{a}ds}{e^{-a}} dt\\
&=-\eta \sum_{i=1}^3
\gamma_i^2(e^{1/\gamma_i}-1)+ \eta e^{2a}\int_0^1 \sum_{i=1}^3 \|b_i\|_\infty\gamma_i^2 (e^{t/\gamma_i}-1) dt\\
&=-\eta \sum_{i=1}^3
\gamma_i^2(e^{1/\gamma_i}-1)+ \eta e^{2a}\sum_{i=1}^3 \|b_i\|_\infty\gamma_i^2\big[ \gamma_i(e^{1/\gamma_i}-1) -1 \big]\\
&=-\eta \sum_{i=1}^3
\gamma_i^2(e^{1/\gamma_i}-1)(1-e^{2a}\|b_i\|_\infty \gamma_i) -\eta e^{2a}\sum_{i=1}^3\|b_i\|_\infty\gamma_i^2. 
\end{align*} By choosing $\gamma_i>0$ small enough such that $1-e^{2a}\|b_i\|_\infty \gamma_i\geq 0$, we see that $\hat x_1(1)<0$. Hence  \eqref{TerminalIneq} is valid. Since 
$e^{\frac{t}{\gamma_i}}\geq 1$ for all $t\in [0, 1]$, we have $\hat u_i<-\gamma_i$. This implies that \ref{MixConstraint1}-\eqref{MixConstraint3} are valid with $\epsilon_i=\gamma_i$. Consequently, $(H4)'$ is fulfilled.

Thus our problem satisfies assumptions (H1)-(H3) and (H4)'. Therefore, we can apply our main theorem to derive necessary optimality conditions for Pareto optimal solutions.

\begin{theorem}[KKT Conditions for Sustainable Energy Management]
Let $\bar{z} = (\bar{x}, \bar{u})$ be a locally weak Pareto optimal solution of the sustainable energy management problem. 

Then there exist a vector $\lambda \in \mathbb{R}^4_+$ with $|\lambda| = 1$, a number $l \in \mathbb{R}$, an absolutely continuous function $p: [0,1] \to \mathbb{R}^3$, and a vector function $\theta = (\theta_1, \theta_2, \theta_3) \in L^\infty([0,1], \mathbb{R}^3)$, such that the following conditions are satisfied:
\def\labelenumi{\rm (\roman{enumi})}
\def\theenumi{\roman{enumi}}
\begin{enumerate}

\item Adjoint equation:
$$
    \dot{p}_1(t) = 0, \;\dot{p}_2(t) = \eta p_1(t) - f_{x_2}[t] p_2(t) + 2\lambda_4(\bar{x}_2(t) - x_2^{\text{target}}), \; p_2(1) = 0,\; \dot{p}_3(t) = 0, \; p_1(1) = -l, \; p_3(1) = -\lambda_3, 
$$

\item Stationary condition in $u$:
\begin{align*}
    2\lambda_1 c_1\bar u_1 - \lambda_2 -\eta p_1-b_1 p_2+\theta_1 +t\theta_3 &= 0 \\
    2\lambda_1 c_2 \bar u_2- \lambda_2 - \eta p_1-b_2 p_2 +\theta_2+t\theta_3 &= 0 \\
    2\lambda_1 c_3\bar u_3 -\eta p_1-b_2p_2-\alpha_3p_3 +\theta_3 &= 0
\end{align*}

\item Complementary conditions:
\begin{align*}
    & l \geq 0, \quad l (\bar{x}_1(1) - x_1^{\max}) = 0 \\
    & \theta_1(t) \geq 0, \quad \theta_1(t) (\bar{u}_1(t) - u_1^{\max}) = 0 \\
    & \theta_2(t) \geq 0, \quad \theta_2(t) (\bar{u}_2(t) - u_2^{\max}) = 0 \\
    & \theta_3(t) \geq 0, \quad \theta_3(t) (t\bar{u}_1(t) + t\bar{u}_2(t) + \bar{u}_3(t) - c(t)) = 0
\end{align*}

\item Nonnegative second-order condition:
\begin{align*}
	\lambda_1\int_0^1 (c_1\tilde u_1^2 + c_2\tilde u_2^2 + c_3\tilde u_3^2)dt + 2\lambda_4 \int_0^1 \tilde x_2^2 dt -\int_0^1 p_2 f_{x_2x_2}[t]\tilde x_2^2 dt\geq 0	
\end{align*}
\end{enumerate}
\end{theorem}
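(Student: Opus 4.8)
The plan is to obtain the statement as a direct specialization of Theorem \ref{Theorem-SOC-MCP1} to the concrete data of the smart grid model. The verification carried out above already shows that this problem has the form ${\rm (P)}$ with $n=3$, $m=3$, $k=4$, a single terminal constraint, and $r=3$ path constraints, and that assumptions $(H1)$--$(H3)$ together with $(H4)'$ are satisfied. Consequently Theorem \ref{Theorem-SOC-MCP1} applies and produces multipliers $\lambda\in\mathbb{R}^4_+$ with $|\lambda|=1$, $l\in\mathbb{R}$, an absolutely continuous $p\colon[0,1]\to\mathbb{R}^3$, and $\theta\in L^\infty([0,1],\mathbb{R}^3)$ satisfying the abstract conditions $(i)$--$(iv)$ of that theorem, the second-order inequality holding for each critical direction $(\tilde x,\tilde u)\in\mathcal{C}_\infty[\bar z]$. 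The remaining work is purely computational: I would evaluate the partial derivatives of $\varphi$, $L$, $\ell$, $g$, and $h$ at $\bar z=(\bar x,\bar u)$ and substitute them into $(i)$--$(iv)$.

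For the first-order part I would record the Jacobians $\varphi_x[t]$ and $\varphi_u[t]$, observing that $\varphi$ depends on the state only through $x_2$, so $\varphi_x[t]$ has nonzero entries only in its second column, namely $-\eta$ (row $1$) and $f_{x_2}[t]$ (row $2$). Substituting $-\varphi_x[t]^Tp$ together with $L_x[t]^T\lambda$ (whose only nonzero contribution is the tracking term $2\lambda_4(\bar x_2-x_2^{\text{target}})$ coming from $J_4$) and $g_x[t]^T\theta=0$ (since $g$ is independent of $x$) into the adjoint equation $(i)$ immediately yields $\dot p_1=\dot p_3=0$ and the stated equation for $\dot p_2$. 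The endpoint values follow from $p(1)=-(\lambda^T\ell'(\bar x(1))+l\,h'(\bar x(1)))$: since only $J_3=\ell_3(x(1))=x_3(1)$ and $h=x_1-x_1^{\max}$ are active at the endpoint, this gives $p_1(1)=-l$, $p_2(1)=0$, $p_3(1)=-\lambda_3$. Condition $(ii)$ is then read off by pairing $\varphi_u[t]^T$ with $p$ and $g_u[t]^T$ with $\theta$ and adding $L_u[t]^T\lambda$, and the complementarity relations $(iii)$ are simply $l\,h=0$ and $\theta_jg_j=0$ for the explicit constraints.

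For the second-order condition $(iv)$ the key simplification is that $\ell$, $h$, and the three components of $g$ are affine, while $\varphi$ and $L$ are affine in $u$ and depend on $x$ only through $x_2$. Hence every mixed Hessian $L_{ux}$, $\varphi_{ux}$, as well as $\varphi_{uu}$, $\ell''$, $h''$, $g_{xx}$, $g_{ux}$, $g_{uu}$, vanishes, and only $L_{1,uu}=\mathrm{diag}(2c_1,2c_2,2c_3)$, $L_{4,x_2x_2}=2$, and $\varphi_{2,x_2x_2}=f_{x_2x_2}[t]$ survive. Collecting these reduces the general quadratic form to the displayed inequality in $\lambda_1$, $\lambda_4$, and $p_2 f_{x_2x_2}[t]$.

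There is no deep analytic obstacle here: the substance lies entirely in Theorem \ref{Theorem-SOC-MCP1}, whose proof already handles the delicate representation of the $(L^\infty)^*$-multiplier by the density $\theta\in L^\infty$ through $(H3)$. The point that most demands care is the terminal-constraint structure, since the abstract framework envisages $h\colon\mathbb{R}^n\to\mathbb{R}^n$ with $\det h'(\bar x(1))\neq0$ in $(H2)$, whereas here $h$ is scalar. I would therefore make explicit that the conclusion is invoked through the $(H4)'$ route rather than through $(H4)$--$(H5)$, and confirm that a single scalar terminal constraint with nonvanishing gradient $h'(\bar x(1))=(1,0,0)$ is compatible with the hypotheses of Theorem \ref{Theorem-SOC-MCP1}, so that the scalar multiplier $l\in\mathbb{R}$ is indeed the correct object.
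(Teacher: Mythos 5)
Your proposal is correct and follows essentially the same route as the paper: the paper's proof is precisely the one-line invocation of Theorem \ref{Theorem-SOC-MCP1}, relying on the verification of $(H1)$--$(H3)$ and $(H4)'$ carried out just before the theorem statement, followed by the same specialization of conditions $(i)$--$(iv)$ to the concrete data. Your extra remark about the mismatch between the scalar terminal constraint here and the framework's $h\colon\mathbb{R}^n\to\mathbb{R}^n$ with $\det h'(\bar x(1))\neq 0$ in $(H2)$ is a legitimate point of care that the paper itself glosses over.
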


\begin{proof}
The proof follows directly from Theorem \ref{Theorem-SOC-MCP1}.
\end{proof}

These conditions provide insights into the trade-offs between different objectives and the impact of constraints on the optimal control strategy for sustainable energy management in smart grids. They can guide the development of practical algorithms for finding Pareto optimal solutions in this complex multiobjective optimization problem. Due to space limitations in the current manuscript, we could not include numerical simulations. A forthcoming paper will deal with the numerical simulation of this problem, where we will use these KKT conditions to solve the numerical problem.

\medskip
	
\noindent {\bf Acknowledgments.} The first author acknowledges the support of Math AmSud project N°51756TF (VIPS) and the FMJH Gaspard Monge Program for optimization and data science. The second author was supported by VAST grant CTTH00.01/25-26 and extends thanks to the XLIM laboratory and Fédération Mathématique Margaux for their support during his visit to the University of Limoges.

\end{document}